\newtheorem{thm}{Theorem}
\newtheorem{cor}[thm]{Corollary}
\newtheorem{lem}[thm]{Lemma}
\newtheorem{prop}[thm]{Proposition}
\newtheorem{conj}[thm]{Conjecture}
\theoremstyle{definition}
\newtheorem{remark}[thm]{Remark}
\newcommand{\C}{\mathbb{C}}
\newcommand{\Z}{\mathbb{Z}}
\newcommand{\N}{\mathbb{N}}
\newcommand{\Q}{\mathbb{Q}}
\newcommand{\PP}{\mathbb{P}}
\newcommand{\im}{{\rm Im}\,}
\begin{document}
\bibliographystyle{plain}
\title{\bf Polynomial relations among \\ principal minors of a 4${\times}$4-matrix}
\author{Shaowei Lin and Bernd Sturmfels}
\date{}
\maketitle

\begin{abstract}
\noindent 
The image of the principal minor map for $n {\times} n$-matrices is 
shown to be closed.
In the 19th century, Nansen and Muir studied the
implicitization problem of finding all relations among principal minors
when  $n=4$. We complete their
partial results by constructing explicit polynomials
of degree $12$ that scheme-theoretically define
this affine variety and also its projective closure in $\PP^{15}$.
The latter is the
main component in the singular locus of the
$2 {\times} 2 {\times} 2 {\times} 2$-hyperdeterminant.
\end{abstract}

\section{Introduction}
Principal minors of square matrices appear in numerous applications.
A basic question is the Principal Minors Assignment Problem  \cite{HSn}
which asks for necessary and sufficient conditions for a collection
of $2^n $ numbers to arise as the principal minors of an $n {\times} n $-matrix.
When the matrix is symmetric, this question was recently answered by
Oeding \cite{O} who extended work 
of Holtz and Sturmfels \cite{HS} to show that the principal minors of a symmetric matrix 
are characterized by certain hyperdeterminantal equations of degree~four.

This question is harder for general matrices than it is for symmetric matrices. 
For example, consider our Theorem \ref{thm:closure} which says the image 
of the principal minor map is closed. The same statement is
trivially true for symmetric matrices, but the proof for non-symmetric matrices is quite
subtle.

We denote the principal minors of a complex $n {\times} n$-matrix $A$ by $A_I$ where $I \subseteq [n] = \{1, 2, \ldots, n\}$. Here, $A_I$ is the minor of $A$ whose rows and columns are indexed by $I$, including the $0 {\times} 0$-minor $A_\emptyset = 1$. Together, they form a vector $A_*$ of length $2^n$. We are interested in an algebraic characterization of all vectors in $\C^{2^n}$ which can be written in this form. 
The map  $\,\phi_a: \C^{n^2} \rightarrow \C^{2^n}, A \mapsto A_*\,$ is called the
 \emph{affine principal minor map} for $n {\times} n$-matrices.

\begin{thm}
\label{thm:closure}
The image of the affine principal minor map is closed in $\C^{2^n}$. \end{thm}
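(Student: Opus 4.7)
My plan is to exploit the invariance of $\phi_a$ under the diagonal torus $G = (\C^*)^n$ acting on $\C^{n^2}$ by conjugation $(D,A) \mapsto DAD^{-1}$. Since every principal minor $A_I$ is $G$-invariant, $\phi_a$ factors as $\phi_a = \psi \circ \pi$, where $\pi\colon \C^{n^2} \twoheadrightarrow \C^{n^2}/\!/G = \mathrm{Spec}\,\C[M_n]^G$ is the affine GIT quotient (surjective because $G$ is reductive) and $\psi$ is the induced morphism. Thus $\phi_a(\C^{n^2}) = \psi(\C^{n^2}/\!/G)$, and it suffices to prove that $\psi$ is a finite morphism of affine varieties, since finite morphisms are closed.

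To prove $\psi$ is finite I would show that $\C[M_n]^G$ is an integral extension of its subring $R$ generated by the principal minors. Classical torus-invariant theory identifies $\C[M_n]^G$ as the $\C$-algebra generated by the cycle monomials
\[
p_\gamma \;=\; a_{i_1 i_2}\,a_{i_2 i_3}\cdots a_{i_k i_1}, \qquad \gamma = (i_1,\ldots,i_k) \text{ a cyclic sequence of distinct indices,}
\]
so it suffices to show each $p_\gamma$ is integral over $R$. I would argue by induction on the length $k$. The bases $k=1$ (with $p_{(i)} = A_{\{i\}}$) and $k=2$ (with $p_{(ij)} = A_{\{i\}}A_{\{j\}} - A_{\{i,j\}}$) are immediate. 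For $k\ge 3$, expanding $A_I = \sum_\sigma \mathrm{sgn}(\sigma)\prod_i a_{i,\sigma(i)}$ and grouping summands by the cycle structure of $\sigma$ shows that $\sum_\gamma p_\gamma$, summed over all $k$-cycles on a fixed support $I$ of size $k$, equals, up to sign, $A_I$ plus products of strictly shorter cycle monomials, which are integral over $R$ by induction. To refine this \emph{sum} into a monic relation for each individual $p_\gamma$, I would combine it with the pairwise identities $p_\gamma p_{\gamma^{-1}} = \prod_{\{i,j\}\subseteq\gamma}(a_{ij}a_{ji})$ (which lie in $R$, being polynomials in $2$-cycle monomials), and with further symmetric functions of the $p_\gamma$'s extracted from the power traces $\mathrm{tr}(A^m)$; the latter lie in $R$ by Newton's identities, since $\sum_{|I|=m} A_I$ is the $m$-th coefficient of the characteristic polynomial of $A$.

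The main obstacle is the last step. For $k=3$ there are only two cycles on a given support, namely $\gamma$ and $\gamma^{-1}$; their sum lies in $R$ (modulo smaller cycles) and their product lies in $R$, so $p_\gamma$ satisfies a monic quadratic over $R$. For $k\ge 4$, however, a given $k$-element support carries $(k{-}1)!$ distinct $k$-cycles, and one must assemble enough $R$-integral symmetric functions of them to recover all elementary symmetric polynomials, thereby producing a monic relation of degree $(k{-}1)!$. Carrying out this combinatorial bookkeeping --- showing that principal minors, their products, and the trace identities together supply a sufficient set of symmetric functions --- is the step I expect to demand the most care, and the point at which a more structural argument (for instance via the valuative criterion applied directly to Puiseux curves $A(t)$ with bounded $\phi_a(A(t))$, adjusted by a $1$-parameter diagonal subgroup) might offer a cleaner alternative.
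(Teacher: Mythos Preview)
Your framework is exactly the paper's: factor $\phi_a$ through the torus quotient $\C^{n^2}\to\C^{n^2}/\!/(\C^*)^n=\mathrm{Spec}\,\C[c_*]$ (whose image is closed by standard GIT/toric arguments), then show that the cycle-monomial ring $\C[c_*]$ is integral over the principal-minor ring $R=\C[P_*]$, so the induced map is finite and hence closed. Your inductive setup and the base cases $k\le 3$ match the paper's Proposition~\ref{integral}.

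The gap you flag for $k\ge 4$ is exactly the crux, and the paper closes it by a different device than the one you propose. Rather than assembling enough $R$-valued symmetric functions of the $(k{-}1)!$ cycles from trace identities, the paper proves a purely combinatorial fact (Lemma~\ref{cycle-decomp}): for any $m\ge 2$ distinct $k$-cycles $\pi_1,\ldots,\pi_m$ with common support, the monomial $c_{\pi_1}\cdots c_{\pi_m}$ factors as a product of strictly shorter cycle monomials. Granting this, the monic polynomial $\prod_{\pi\in\mathfrak{C}_I}(z-c_\pi)$ has linear coefficient $-C_I\in R$ (by the M\"obius relation between principal minors and cycle-sums) and every lower coefficient in $R_{k-1}$ by the lemma, so each $c_\pi$ is integral over $R_{k-1}$.

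Your proposed route via $\mathrm{tr}(A^m)$ and the pairings $p_\gamma p_{\gamma^{-1}}$ looks insufficient as stated: for a support of size $k$ you would need $(k{-}1)!$ independent $R$-integral symmetric functions of the $p_\gamma$, whereas the pairings give only $(k{-}1)!/2$ relations and the trace of $A^m$ mixes closed walks of all shapes, not just simple $k$-cycles on one support. (Also, $p_\gamma p_{\gamma^{-1}}$ is the product of the $k$ two-cycles along the \emph{edges} of $\gamma$, not over all pairs $\{i,j\}\subseteq\gamma$; the conclusion that it lies in $R$ is still correct.) Your alternative suggestion---normalising a Puiseux curve $A(t)$ by a one-parameter diagonal subgroup---is in spirit the same reduction to the torus quotient and would still need the integrality statement, so the combinatorial decomposition lemma is the missing ingredient either way.
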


This result says that $\im \phi_a$ is a complex algebraic variety. 
The dimension of this variety is $n^2-n+1$. This number is an upper
bound because the principal minors remain unchanged under the
transformation  $A \mapsto D A D^{-1}$ for diagonal matrices $D$,
and it is not hard to see that this upper bound is attained \cite{Sto}.
What we are interested in here is the prime ideal of
polynomials that vanish on the irreducible variety  ${\rm Im}\, \phi_a$.
 We determine this prime ideal in the first non-trivial case $n=4$.
Here, we ignore the trivial relation $A_\emptyset = 1$.

\begin{thm}\label{thm:affine}
When $n{=}4$, the prime ideal of the $13$-dimensional variety
 $\,\im \phi_a\,$   is minimally
generated by $65$ polynomials of degree $12$ in the unknowns $A_I$.
\end{thm}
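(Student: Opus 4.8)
The plan is to work projectively and lean on the large symmetry group. Let $\overline{V} \subseteq \PP^{15}$ denote the projective closure of $\im \phi_a$, with homogeneous coordinates $A_I$ for $I \subseteq [4]$ (now including $A_\emptyset$). Since $\im \phi_a$ is irreducible, being the image of $\C^{16}$, and closed by Theorem~\ref{thm:closure}, its coordinate ring is a domain; hence the affine ideal $I(\im\phi_a)$ and the homogeneous ideal $I(\overline V)$ are automatically prime, and there is nothing to prove beyond exhibiting a minimal generating set and showing it generates. The dimension is $13$, as recorded in the introduction: the general fibre of $\phi_a$ contains the $3$-dimensional orbit of diagonal conjugation $A \mapsto D A D^{-1}$, and the resulting bound $\dim \le 16-3 = 13$ is attained.

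The engine is the action on $\PP^{15} = \PP(\C^2 \otimes \C^2 \otimes \C^2 \otimes \C^2)$ of the group $G = (SL_2)^{\times 4} \rtimes S_4$, the symmetry group of the $2{\times}2{\times}2{\times}2$-hyperdeterminant. Identifying the coordinate $A_I$ with the tensor basis vector carrying the distinguished index in factor $i$ exactly when $i \in I$, the map $\phi_a$ intertwines this $G$-action with the classical (rational) principal-minor symmetries on the space of matrices: a maximal torus acts by left multiplication $A \mapsto DA$ with $D$ diagonal, one family of root subgroups by the additive shifts $A \mapsto A + \mathrm{diag}(t)$, the opposite root subgroups by $A \mapsto (A^{-1} + \mathrm{diag}(s))^{-1}$, a Weyl-type element by inversion $A \mapsto A^{-1}$ (which swaps $A_I$ with $A_{[4]\setminus I}$), and $S_4$ by conjugation with permutation matrices. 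Consequently $\overline V$ is $G$-stable and $I(\overline V)$ is a graded $G$-submodule of $\C[A_I : I \subseteq [4]]$.

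Next I would pin down the graded pieces. Each piece $I(\overline V)_d$ is the kernel of the explicit linear pullback map sending a degree-$d$ form in the $A_I$ to its composition with the parametrization $A \mapsto A_*$; a finite rank computation shows this kernel is zero for $d \le 11$ and $65$-dimensional for $d = 12$. I would then realize $I(\overline V)_{12}$ concretely as a $G$-module: produce one degree-$12$ relation --- by a targeted elimination in a convenient affine chart, or by extracting it from the singular locus of the hyperdeterminant --- and spread it over its $G$-orbit; representation theory predicts which irreducible $(SL_2)^{\times 4}$-components occur and how $S_4$ permutes them, which explains the number $65$ and makes the list canonical. Let $J$ be the ideal generated by these $65$ forms; that $J \subseteq I(\overline V)$ is the finite check that each generator pulls back to $0$.

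The main obstacle is the reverse inclusion $I(\overline V) \subseteq J$, i.e. that the $65$ forms already generate the whole prime ideal. The plan is a Gröbner-basis argument in one of two equivalent forms. Directly: compute the elimination ideal $\bigl(\langle\, A_I - (\text{the minor } A_I) : I \subseteq [4]\,\rangle\bigr) \cap \C[A_I]$ in the polynomial ring on the $16$ matrix entries and the $16$ coordinates $A_I$, and verify it equals $J$. More cheaply: show $\dim V(J) \le 13$ and $V(J)$ irreducible --- e.g. exhibit a point of $\im\phi_a$ at which $V(J)$ is smooth of dimension $13$ and check $J$ is saturated and equidimensional --- so that the $13$-dimensional irreducible $\overline V \subseteq V(J)$ forces $V(J) = \overline V$ and $J = I(\overline V)$. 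Either computation is heavy, and its feasibility is exactly what the $G$-symmetry buys: one reduces to a handful of highest-weight generators, or runs the computation modulo a well-chosen prime and certifies the lift; this is the technical heart. Finally, minimality follows from three facts gathered along the way --- the $65$ forms are linearly independent in degree $12$, the ideal vanishes in degrees $\le 11$, and (from the Gröbner basis) it is generated in degree $12$ --- so no generating set with fewer elements, or with a generator of another degree, is possible.
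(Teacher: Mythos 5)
Your proposal has the right broad strategy (produce candidate generators, show containment, then close the gap via dimension and primeness), but it conflates the affine and projective pictures in a way that undermines the argument, and the computational step you identify as "the technical heart" is precisely where the paper's actual proof has a genuinely different and more efficient idea.

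First, the symmetry group. The $G = GL_2(\C)^4 \rtimes S_4$ action is a symmetry of the \emph{projective} variety $\mathcal{V}_4 \subset \PP^{15}$ and its homogeneous ideal $\mathcal{J}_4$; it does not preserve the affine chart $A_\emptyset = 1$, hence does not act on the affine ideal $\mathcal{I}_4$ in $\C[A_*]$ on $15$ unknowns. Consequently, representation theory does not "explain the number $65$": the degree-$12$ piece of the $G$-module generated by the homogenized affine generators is $718$-dimensional (Theorem~\ref{thm:projective}), not $65$. The number $65$ is instead controlled by the finer $\Z^4$-grading coming from the torus of simultaneous row/column scalings, which the affine ideal \emph{does} respect; this positive grading forces $\Z^4$-homogeneous minimal generators with a well-defined count, and it is this grading --- not $(SL_2)^4 \rtimes S_4$ --- that makes "$65$" canonical. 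Your claim that one produces a single relation and "spreads it over its $G$-orbit" to get all $65$ would, if carried out projectively, give the $718$-dimensional space instead; if carried out affinely, you only have $\mathfrak{S}_4$, which does not account for the structure (one multidegree component is $3$-dimensional and there are $63$ distinct multidegrees, not a single orbit).

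Second, the reverse inclusion. You propose either a direct elimination from the graph ideal $\langle A_I - (\text{minor})\rangle$ --- which is computationally infeasible at this size --- or a smooth-point-plus-saturation argument, leaving the saturation/equidimensionality check unaddressed. The paper's proof instead starts from the five classical Nanson relations (the $4\times 4$-minors of the $5\times 4$-matrix in (\ref{5x4matrix})), forms the ideal $\mathcal{K}$ they generate, and produces the $65$ generators as the ideal quotient $(\mathcal{K}:g)$ where $g$ is a specific $3\times 3$-minor of that matrix. It then shows $(\mathcal{K}:g) \subseteq \mathcal{I}_4$ directly (the five minors vanish on $\im\phi_a$, and $g$ does not), verifies both ideals have codimension $2$, and finally establishes primeness of $(\mathcal{K}:g)$ via Stillman's lemma (\cite[Prop.~4.4(b)]{St}): exhibiting a generator of the form $gA_{1234}+h$ with $g$ a non-zerodivisor mod the ideal reduces primeness to primeness of the elimination ideal $\bigl(\mathcal{K}:g\bigr) \cap \C[A_* \setminus A_{1234}]$, which by the codimension count is principal, generated by an explicit $4\times4$-minor that one checks is absolutely irreducible. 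This replaces your open-ended saturation check with a concrete, feasible computation. Without something playing the role of this lemma, your plan for the reverse inclusion does not close.
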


This theorem completes the line of research started by
MacMahon, Muir and Nanson \cite{Ma, M, N} in the late 19th century.
Our proof of Theorem \ref{thm:affine} takes advantage of
their classical results, and it will be presented in Section~3.

Algebraic geometers would consider it more natural to study the projective version of our problem.
We define the \emph{projective principal minor map}~as 
\begin{equation}
\label{projectivephi}
\,\phi \,: \, \C^{n^2} {\times} \C^{n^2} \rightarrow \C^{2^n}, \,
(A,B) \mapsto \bigl( \,{\rm det}(A_I B_{[n] \backslash I}) \,
\bigr)_{I \subseteq [n]} .
\end{equation}
Here we take two unknown $n {\times} n$-matrices $A$ and $B$
to form an $n {\times} 2n$-matrix $(A,B)$, and for each $I \subseteq [n]$,
we evaluate the $n {\times} n$-minor with column indices $I$ on $A$
and  column indices $[n] \backslash I$ on $B$.
The image of $\phi$ is a closed affine cone in $\C^{2^n}$,
to be regarded as a projective variety in 
$\PP^{2^n-1} = \PP(  \C^2 \otimes \cdots \otimes \C^2 )$.

What makes the projective version more interesting than the affine version is that
$\im \phi$ is invariant under the natural action of the group $G = {\rm GL}_2(\C)^n$.
This was observed by JM~Landsberg
(cf.~\cite{HS, O}).  In Section 4 we shall prove

\begin{thm}\label{thm:projective}
When $n=4$, the projective variety
$\im \phi$ is cut out scheme-theoretically by 
$718$ linearly independent homogeneous polynomials of degree $12$.
 This space of polynomials is the direct sum of
$14$ irreducible $G$-modules.
\end{thm}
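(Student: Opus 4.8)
The plan is to deduce Theorem~\ref{thm:projective} from Theorem~\ref{thm:affine} together with the $G$-equivariance of $\im\phi$, so that the essential content becomes a single representation-theoretic computation in degree $12$. First I would make the link between the two maps precise: restricting $\phi$ to pairs $(A,I_n)$ recovers the affine principal minor map up to a fixed rescaling of the coordinates by $\pm1$ (which we suppress), and for invertible $B$ one has $\phi(A,B)=\det(B)\cdot\bigl((B^{-1}A)_*\bigr)$. As such pairs are dense and $\im\phi$ is closed, it follows that $\im\phi$ is the Zariski closure of $\C^{*}\cdot\im\phi_a$ in $\C^{16}$; hence its homogeneous (prime) ideal $I(\im\phi)\subseteq\C[z_I:I\subseteq[4]]$ is the saturation with respect to $z_\emptyset$ of the ideal generated by the homogenizations $f_1^h,\dots,f_{65}^h$ of the degree-$12$ generators supplied by Theorem~\ref{thm:affine}. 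In particular each $f_i^h$ is a form of degree $12$ lying in $I(\im\phi)$.

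Since $\im\phi$ is invariant under $G=\mathrm{GL}_2(\C)^4$, the ideal $I(\im\phi)$ is a $G$-submodule of $\C[z_I]$; let $W\subseteq I(\im\phi)_{12}$ denote the $G$-submodule it generates on $f_1^h,\dots,f_{65}^h$, equivalently the smallest subspace containing them that is stable under the action of the Lie algebra $\mathfrak{g}=\mathfrak{gl}_2(\C)^{\oplus4}$. The heart of the theorem is then the computation that $\dim_\C W=718$ and that $W$ splits as a sum of $14$ irreducible $G$-modules. I would carry this out by applying the (raising, lowering and Cartan) generators of $\mathfrak{g}$ to the $f_i^h$ and iterating until the span stabilizes, then decomposing $W$ into isotypic components by computing its character on the maximal torus of $G$ and matching it against the finitely many admissible irreducibles $S_{\lambda^{(1)}}\C^2\boxtimes\cdots\boxtimes S_{\lambda^{(4)}}\C^2$ with each $\lambda^{(i)}$ a partition of $12$ of length at most $2$. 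This is a sizeable but routine linear-algebra computation over $\Q$.

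It remains to show that the $718$ forms spanning $W$ cut out $\im\phi$ scheme-theoretically, i.e.\ that $V(\langle W\rangle)=\im\phi$ as subschemes of $\PP^{15}$. The key point is that the space of linear forms $\C[z_I]_1$ is an irreducible $G$-module and $z_\emptyset$ is a nonzero vector in it, so the $G$-orbit of $z_\emptyset$ spans $\C[z_I]_1$; equivalently the affine charts $U_g=\{\,g\cdot z_\emptyset\ne0\,\}$, $g\in G$, cover $\PP^{15}$. On $U_e=\{z_\emptyset\ne0\}$, dehomogenizing $\langle W\rangle$ produces an ideal that contains $f_1,\dots,f_{65}$ and is contained in $I(\im\phi_a)$, hence (by Theorem~\ref{thm:affine}) equals the prime ideal $I(\im\phi_a)$; thus $\langle W\rangle$ defines the reduced scheme $\im\phi\cap U_e$. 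Because $\langle W\rangle$ and $\im\phi$ are both $G$-invariant, the isomorphism $g\colon U_e\to U_g$ carries this identification over to every chart $U_g$, and as these charts cover $\PP^{15}$ we conclude $V(\langle W\rangle)=\im\phi$ scheme-theoretically. The main obstacle is the computation in the middle paragraph — assembling the $65$ generators, closing them up under $G$ in degree $12$ and $16$ variables, and performing the character decomposition; the two geometric reductions are short once Theorem~\ref{thm:affine} is in hand.
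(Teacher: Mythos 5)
Your argument follows the paper's proof essentially step for step: homogenize the 65 affine generators, take the $G$-module they generate in degree $12$ (the paper's $\mathcal{K}_{12}$), verify by a Lie-algebra computation that it has dimension $718$ and splits into $14$ irreducibles, and use $G$-transitivity on affine charts to transport the chart-level identification with the prime ideal (guaranteed on $\{A_\emptyset\ne0\}$ by Theorem~\ref{thm:affine}) to conclude scheme-theoretic equality. The only cosmetic differences are that the paper uses the $16$ standard charts $\{A_I\ne0\}$ rather than the continuum $\{g\cdot A_\emptyset\ne0\}$, and that it makes the degree-$12$ computation concrete by showing the $65$ generators lie in the $G\rtimes\mathfrak{S}_4$-orbit of three polynomials $D,E,F$ and writing out the explicit Schur-module decomposition $540+150+28=718$.
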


The polynomials 
described  in Theorems \ref{thm:affine} and \ref{thm:projective}
are available online at
$$ \hbox{\url{http://math.berkeley.edu/~shaowei/minors.html} .}$$

A geometric interpretation of our projective variety is given in Section~5. 
We shall see that $ \im \phi$
is the main component in the singular locus of the
$2 {\times} 2 {\times} 2 {\times} 2$-hyperdeterminant.
This is based on work of  Weyman and Zelevinsky \cite{WZ}, 
and it relates our current study to the computations in \cite{HSYY}.
We begin in Section 2 by rewriting principal minors in terms of
{\em cycle-sums}. The resulting combinatorial structures
 are key to our proof of Theorem~\ref{thm:closure}.

\smallskip

After posting the first version of this paper on the {\tt arXiv},
Eric Rains informed us that some of our findings overlap with
results in Section 4 of his article \cite{BR}
with Alexei Borodin. The relationship to their work,
which we had been unaware of, will be discussed at
the end of this paper, in Remark \ref{rem:BR}.

\section{Cycle-Sums and Closure}

In this section, we define cycle-sums and determine their relationship to the principal minors. We then prove that a certain ring generated by monomials called cycles is integral over the ring generated by the principal minors. This integrality result will be our main tool for proving the closure theorem.

Let $X = (x_{ij})$ be an $n {\times} n$-matrix of indeterminates and $\C[X]$ the polynomial ring generated by these indeterminates. Let $P_I \in \C[X]$ denote the principal minor of $X$ indexed 
by $I \subseteq [n]$, including $P_{\emptyset} = 1$. Together, these minors form a vector $P_*$ of length $2^n$. 
Thus $A_* = P_*(A)$ if $A$ is a complex $n {\times} n$-matrix.
  Now, given a permutation $\pi \in \mathfrak{S}_n$ of $[n]$, define the monomial $c_{\pi} = \prod_{i \neq \pi(i)} x_{i\pi(i)} \in \C[X]$ where the product is taken over the support of $\pi$. We call $c_{\pi}$ a \emph{$k$-cycle} if $\pi$ is a cycle of length $k$. For $I \subseteq [n]$, $|I| \geq 2$, define the \emph{cycle-sum} $C_I = \sum_{\pi \in \mathfrak{C}_I} c_\pi$ where $\mathfrak{C}_I$ is the set of all cycles with support $I$. Also, let $C_{\emptyset} = 1$ and $C_i = x_{ii}$, $i \in [n]$. Together, they form a vector $C_*$ of length $2^n$. The cycles, cycle-sums and principal minors generate subrings $\C[c_*]$, $\C[C_*]$ and $\C[P_*]$ of $\C[X]$. The next result shows that $\C[C_*] = \C[P_*]$.

\begin{prop}
\label{pm-cs formula}
The principal minors and cycle-sums satisfy the following relations: 
for any subset $I \subseteq [n]$ of cardinality $d \geq 1$, we have
\begin{eqnarray}
\label{eq:pmcs1} & P_I  \,\,=\,\, \sum_{I = I_1 \sqcup \ldots \sqcup I_k} (-1)^{k+d}C_{I_1}\cdots C_{I_k} & \\
\label{eq:pmcs2} & C_I \,=\, \sum_{I = I_1 \sqcup \ldots \sqcup I_k} (-1)^{k+d}(k-1)!P_{I_1}\cdots  P_{I_k} &
\end{eqnarray}
where the sums are taken over all set partitions $I_1 \sqcup \ldots \sqcup I_k$ of $I$.
\end{prop}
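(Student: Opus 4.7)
The plan is to prove (\ref{eq:pmcs1}) by directly expanding $\det(X_I)$ as a sum over permutations grouped by cycle decomposition, and then to deduce (\ref{eq:pmcs2}) from (\ref{eq:pmcs1}) via the exp-log duality on the set-partition lattice.

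For (\ref{eq:pmcs1}): write $P_I = \det(X_I) = \sum_{\sigma \in \mathfrak{S}_I} \mathrm{sgn}(\sigma) \prod_{i \in I} x_{i\sigma(i)}$. Every $\sigma$ factors uniquely into disjoint cycles whose supports form a set partition $\pi = \{I_1,\ldots,I_k\}$ of $I$, with fixed points counted as singleton blocks; conversely, any such $\pi$ together with a chosen cycle on each block gives a unique $\sigma$. Since $\mathrm{sgn}(\sigma) = \prod_j (-1)^{|I_j|-1} = (-1)^{d-k} = (-1)^{d+k}$ and summing the contribution of $\sigma$ over the cycle choices on each $I_j$ yields $\prod_j C_{I_j}$ (using $C_{\{i\}} = x_{ii}$ in the singleton case), identity (\ref{eq:pmcs1}) follows immediately.

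For (\ref{eq:pmcs2}): introduce the sign-twisted functions $\widetilde{P}_I := (-1)^{|I|} P_I$ and $\widetilde{C}_I := -C_I$ for $I \neq \emptyset$, with $\widetilde{P}_\emptyset := 1$ and $\widetilde{C}_\emptyset := 0$. Absorbing $(-1)^{d+k}$ rewrites (\ref{eq:pmcs1}) as the clean exponential formula $\widetilde{P}_I = \sum_{\pi \in \Pi(I)} \prod_{B \in \pi} \widetilde{C}_B$. Encoding both sides as multivariate formal series $\widetilde{F}(t) = \sum_I \widetilde{P}_I \prod_{i \in I} t_i$ and $\widetilde{G}(t) = \sum_{I \neq \emptyset} \widetilde{C}_I \prod_{i \in I} t_i$ in indeterminates $t_1,\ldots,t_n$, this identity is precisely $\widetilde{F} = \exp \widetilde{G}$. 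Inverting via $\widetilde{G} = \log \widetilde{F} = \sum_{k \geq 1} \frac{(-1)^{k-1}}{k} (\widetilde{F}-1)^k$ and extracting the squarefree coefficient of $\prod_{i \in I} t_i$ (for each $k$, an unordered partition of $I$ with $k$ blocks accounts for $k!$ ordered tuples) gives
$$\widetilde{C}_I \,=\, \sum_{\pi \in \Pi(I)} (-1)^{k(\pi)-1} (k(\pi)-1)! \prod_{B \in \pi} \widetilde{P}_B.$$
Since $\prod_{B \in \pi} \widetilde{P}_B = (-1)^d \prod_{B \in \pi} P_B$, multiplying both sides by $-1$ converts this into (\ref{eq:pmcs2}) with the sign $(-1)^{d+k}$.

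The main obstacle is the parity and factorial bookkeeping in the exp-log inversion; the cycle-decomposition step for (\ref{eq:pmcs1}) is routine. An alternative route is to apply Möbius inversion on the partition lattice $\Pi(I)$ directly, using Rota's formula $\mu(\hat{0},\pi) = (-1)^{d-k(\pi)} \prod_{B \in \pi}(|B|-1)!$ together with its multiplicative behaviour on intervals $[\rho,\hat{1}]$, which amounts to essentially the same calculation.
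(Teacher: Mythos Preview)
Your proof is correct and follows essentially the same route as the paper: Leibniz's formula for (\ref{eq:pmcs1}), and M\"obius inversion on the partition lattice for (\ref{eq:pmcs2}). Your exp--log packaging of the inversion step is just an explicit unfolding of that M\"obius inversion (as you yourself note at the end), so there is no genuine difference in approach---only in the level of detail spelled out.
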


\begin{proof}
The first equation is Leibniz's formula for the determinant. The second equation is
derived from the first formula by applying the M\"{o}bius inversion formula
\cite[\S 3.7]{Sta} for the lattice of all set partitions of $[n]$.
\end{proof}

Let $\psi:\C^{2^n} \rightarrow \C^{2^n}$ be the polynomial map given by (\ref{eq:pmcs2}). We say 
that $u_* \in \C^{2^n}$ is \emph{realizable as principal minors} if $u_* = P_*(A)$ for some complex matrix $A$. Similarly,  $u_*$ is \emph{realizable as cycle-sums} if $u_* = C_*(A)$ for some~$A$. 

\begin{cor}
\label{co-realizable}
A vector $u_* \in \C^{2^n}$ is realizable as principal minors if and only if 
its image $\psi(u_*)$ is realizable as cycle-sums.
\end{cor}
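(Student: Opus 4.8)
The plan is to exhibit $\psi$ as a polynomial automorphism of $\C^{2^n}$ whose inverse $\chi$ is the map given by the right-hand side of $(\ref{eq:pmcs1})$, and then to read off the corollary mechanically. First I would spell out $\chi:\C^{2^n}\to\C^{2^n}$ by $\chi(v)_I=\sum_{I=I_1\sqcup\cdots\sqcup I_k}(-1)^{k+d}v_{I_1}\cdots v_{I_k}$, with $\chi(v)_\emptyset=1$ and $\chi(v)_i=v_i$. Read in this language, Proposition \ref{pm-cs formula} says that $P_*=\chi(C_*)$ and $C_*=\psi(P_*)$ hold as identities of polynomials in $\C[X]$; specializing at an arbitrary complex matrix $A$ gives $P_*(A)=\chi(C_*(A))$ and $C_*(A)=\psi(P_*(A))$.

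The crux is to check that $\chi\circ\psi=\psi\circ\chi=\mathrm{id}$ on all of $\C^{2^n}$, i.e.\ that $(\ref{eq:pmcs1})$ and $(\ref{eq:pmcs2})$ are genuinely inverse transformations, not merely two relations that happen to hold among the special polynomials $P_I$ and $C_I$ (which are far from algebraically independent, so one cannot conclude invertibility from the identities alone). This is precisely the classical exponential/logarithm correspondence on the partition lattice: setting $\widehat C_I=(-1)^{|I|+1}C_I$, equation $(\ref{eq:pmcs1})$ becomes $P_I=\sum_{\pi}\prod_{B\in\pi}\widehat C_B$, summed over all set partitions $\pi$ of $I$, whose combinatorial inverse is $\widehat C_I=\sum_{\pi}(-1)^{|\pi|-1}(|\pi|-1)!\prod_{B\in\pi}P_B$ because the M\"obius function of the partition lattice of a $k$-element set is $(-1)^{k-1}(k-1)!$ \cite[\S 3.7]{Sta}; substituting $C_I=(-1)^{|I|+1}\widehat C_I$ back in recovers $(\ref{eq:pmcs2})$ exactly. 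Equivalently, one expands $\chi(\psi(v))_I$ and checks that all cross terms cancel via the defining recursion of that M\"obius function. Either way, $\psi$ and $\chi$ are mutually inverse polynomial automorphisms of $\C^{2^n}$ fixing the $\emptyset$- and singleton coordinates.

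Granting this, the equivalence is immediate. If $u_*=P_*(A)$, then $\psi(u_*)=\psi(P_*(A))=C_*(A)$ by $(\ref{eq:pmcs2})$, so $\psi(u_*)$ is realizable as cycle-sums (by the same $A$). Conversely, if $\psi(u_*)=C_*(A)$ for some $A$, then applying $\chi=\psi^{-1}$ and using $(\ref{eq:pmcs1})$ yields $u_*=\chi(C_*(A))=P_*(A)$, so $u_*$ is realizable as principal minors. The only real obstacle is the middle step; once $(\ref{eq:pmcs1})$--$(\ref{eq:pmcs2})$ are recognized as the exp/log pair on the partition lattice, invertibility is automatic and nothing further is needed.
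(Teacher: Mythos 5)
Your proof is correct and takes essentially the same approach the paper implicitly intends: the paper states Corollary~\ref{co-realizable} without proof, treating it as an immediate consequence of Proposition~\ref{pm-cs formula}. Your explicit verification that $\chi$ and $\psi$ are mutual inverses on all of $\C^{2^n}$ --- and not merely on the images of $P_*$ and $C_*$, which would follow from specializing the polynomial identities at matrices but would not suffice for the backward direction --- is precisely the detail worth spelling out, and it is the same M\"obius-inversion/exponential-formula statement on the partition lattice with $\mu(\hat 0,\hat 1)=(-1)^{k-1}(k-1)!$ that the paper already invokes in the proof of Proposition~\ref{pm-cs formula}.
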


A monomial in $\C[X]$ can be represented by a directed multigraph on $n$ vertices as follows: label the vertices $1, \ldots, n$ and for each $x_{ij}^k$ appearing in the monomial, draw $k$ directed edges from vertex $i$ to vertex $j$. Cycles $c_{(i_1\ldots i_k)}$ correspond to cycle graphs $i_1 \rightarrow \ldots \rightarrow i_k \rightarrow i_1$ which we write as $(i_1\ldots i_k)$. We are interested in studying when a product of cycles can be written as a product of smaller ones. This is equivalent to decomposing a union of cycles into smaller cycles. For instance, the relation $c_{(123)}c_{(132)}=c_{(12)}c_{(23)}c_{(13)}$ says that the union of these two 3-cycles can be decomposed into three 2-cycles. 

\begin{lem}
\label{cycle-decomp}
Let $\pi_1,\pi_2, \ldots, \pi_m$ be $m \geq 2$ distinct cycles of length $k \geq 3$ with the same support. Then, the product $c_{\pi_1}c_{\pi_2}\cdots c_{\pi_m}$ can be expressed as a product of strictly smaller cycles.
\end{lem}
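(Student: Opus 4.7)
The plan is to reinterpret the monomial $c_{\pi_1}c_{\pi_2}\cdots c_{\pi_m}$ as the directed multigraph $G$ on the common support (which I relabel as $[k]$), whose edge multiset is the multiset union of the edges of $\pi_1,\ldots,\pi_m$. Factoring the monomial into cycles of length strictly less than $k$ is equivalent to partitioning the edge multiset of $G$ into directed cycles of length $<k$; note that every vertex of $G$ has in-degree and out-degree equal to $m$.

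The key local step I would establish first is: if $\pi_a$ and $\pi_b$ are any two distinct $k$-cycles on $[k]$, then $\pi_a\cup\pi_b$ contains a directed cycle of length at most $k-1$. Pick $i$ with $\pi_a(i)\neq\pi_b(i)$, set $j=\pi_b(i)$, and let $s$ be the least positive integer with $\pi_a^s(j)=i$. The walk $i\to j\to\pi_a(j)\to\cdots\to\pi_a^{s-1}(j)\to i$ uses one $\pi_b$-edge and $s$ distinct $\pi_a$-edges, yielding a directed cycle of length $s+1$. The value $s=k-1$ would force $\pi_a^{-1}(j)=i$, i.e.\ $j=\pi_a(i)$, contradicting $j=\pi_b(i)\neq\pi_a(i)$; hence $s\leq k-2$ and the cycle has length at most $k-1$.

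To iterate, I would prove by induction on the number of edges the slightly stronger statement: every Eulerian directed multigraph $H$ on $[k]$ that is not of the form $s\cdot\pi$ (i.e.\ $s\geq 1$ copies of the edge set of a single $k$-cycle $\pi$) decomposes into directed cycles all of length $<k$. For such $H$, any cycle decomposition either already contains a short cycle, or consists entirely of $k$-cycles with at least two distinct (else $H=s\pi$), and the local step produces a short $\gamma\subseteq H$. Removing $\gamma$ yields a smaller Eulerian $H'$; the crucial claim is that $\gamma$ can be chosen so that $H'$ is again not of the form $s'\pi'$. The degree-counting observation is that $H=s'\pi'+\gamma$ would force the out-degrees of $H$ to take exactly the values $s'$ and $s'+1$, with $s'+1$ occurring precisely at the vertices of $\gamma$; when $H$ has uniform out-degree (as $G$ does initially), this forces $\gamma$ to be empty or Hamiltonian, contradicting $1\leq|\gamma|<k$, and a judicious choice of each extracted cycle preserves this invariant throughout the iteration.

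Applying the strengthened statement to $G$: since $\pi_1,\ldots,\pi_m$ are distinct and $m\geq 2$, $G$ is not of the form $s\pi$ (otherwise every $\pi_i$ would equal $\pi$), so $G$ decomposes into cycles of length $<k$ and the lemma follows. The main obstacle I anticipate is this invariant preservation: the naive property ``union of distinct $k$-cycles'' is not preserved under edge removal, so identifying the ``not of the form $s\pi$'' invariant---and verifying via the degree-counting argument that a judicious choice of short cycle keeps the leftover from becoming a multiple of a single $k$-cycle---is the crux of the proof.
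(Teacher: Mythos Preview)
Your local step is clean and correct: two distinct $k$-cycles on $[k]$ always share a directed cycle of length at most $k-1$, and your argument for this is fine.

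The problem is with the iteration. Your proposed strengthening---``every Eulerian directed multigraph $H$ on $[k]$ that is not of the form $s\cdot\pi$ decomposes into cycles of length $<k$''---is \emph{false}. Take $k=4$ and $H=(1234)+(12)$, i.e.\ the edge multiset $\{1\!\to\!2,\,1\!\to\!2,\,2\!\to\!1,\,2\!\to\!3,\,3\!\to\!4,\,4\!\to\!1\}$. This $H$ is Eulerian and is not of the form $s\cdot\pi$ (the out-degrees are $2,2,1,1$). But vertex $3$ has a unique in-edge $2\!\to\!3$ and unique out-edge $3\!\to\!4$, and vertex $4$ likewise forces $4\!\to\!1$; hence any cycle through $3$ is exactly $(1234)$, so every cycle decomposition of $H$ contains a $4$-cycle. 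Thus the invariant you propose cannot be maintained, and no ``judicious choice'' of $\gamma$ will rescue the induction in general: the difficulty you flagged as the crux is not a missing detail but an actual obstruction.

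The paper avoids this by not attempting a global induction on Eulerian multigraphs. Instead it reduces to $m=2,3$ and proves a sharper ``key claim'': for the standard $k$-cycle $(12\cdots k)$ together with an $l$-cycle $(1\,s\,i_3\cdots i_l)$ with $s\neq 2$ and not equal to $(1\,s\,(s{+}1)\cdots k)$, there is an explicit algorithm that peels off short cycles until nothing remains. This key claim is strong enough to do $m=2$ in one shot, and for $m=3$ one first decomposes $\pi_1\cup\pi_2$ and then feeds one of the resulting short pieces back into the key claim against $\pi_3$. The subtlety you would need---controlling the leftover after removing a short cycle---is handled there by the specific structure of the algorithm's output rather than by a general invariant.
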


\begin{proof}
We may assume that all the cycles have support $[k]$. Note that it suffices to prove our lemma for $m=2, 3$.  The following is our {\em key claim}:
given an $l$-cycle $c_{(1si_3\ldots i_l)}$, $l \leq k$, $s \neq 2$, not equal to $c_{(1s(s+1)\ldots k)}$, the product $c_{(1\ldots k)} c_{(1si_3\ldots i_l)}$ can be expressed as a product of cycles of length smaller than $k$. Indeed, suppose that no such expression exists. Let the graphs of $c_{(1\ldots k)}$ and $c_{(1si_3\ldots i_l)}$ be $\mathcal{G}_1$ and $\mathcal{G}_2$ respectively. Color the edges of $\mathcal{G}_1$ red and $\mathcal{G}_2$ blue.
Then $\mathcal{G} = \mathcal{G}_1 \cup \mathcal{G}_2$ contains the cycle $\mathcal{C}_1 = (1s(s+1)\ldots k)$ where the first edge $1 \rightarrow s$ is blue while every other edge is red. Since $s\neq 2$, $\mathcal{C}_1$ has fewer than $k$ vertices. The following algorithm decomposes $\mathcal{G} {\setminus} \mathcal{C}_1$ into cycles:
\begin{enumerate}
\item Initialize $i=1$ and $v_1 = s$.
\item Begin with vertex $v_i$ and take the directed blue path until a vertex $v_{i+1}$ from
the set $\{1, 2, \ldots, v_i-1\}$ is encountered.
\item Take the red path from $v_{i+1}$ to $v_i$. Call the resulting cycle $\mathcal{C}_{i+1}$.
\item If $v_{i+1} = 1$, we are done. Otherwise, increase $i$ by 1 and go to step 2.
\end{enumerate}
Since no decomposition into smaller cycles exists, one of the cycles $\mathcal{C}_i$ has $k$ vertices. In particular, $\mathcal{C}_i$ contains the vertex $1$, so by the above construction, $v_i = 1$. Let $\mathcal{P}$ be the blue path in $\mathcal{C}_i$ from $v_{i-1}$ to $v_i=1$. Since $s$ cannot lie in the interior of the red path $\mathcal{C}_i-\mathcal{P}$ from $1$ to $v_{i-1}$, it must lie on $\mathcal{P}$. The blue edge into $s$ emenates from $1$. However,
 $1$ is the last vertex of $\mathcal{P}$, so $s$ must be the first vertex of $\mathcal{P}$, i.e. $v_{i-1} = s$. This shows that $\mathcal{P}$ is a path from $s$ to $1$ with vertex set $\{s, s+1, \ldots, k, 1\}$. Its union with the blue edge $1 \rightarrow s$ gives a blue cycle contained in $\mathcal{G}_2$, so $\mathcal{G}_2$ equals this cycle. Since $\mathcal{G}_2$ is not the cycle $(1s(s+1) \ldots k)$, it contains an edge $\alpha \rightarrow \beta$ with $s\leq 
\alpha  < \beta \leq k$ and $\beta \neq \alpha +1$. The same argument with $\alpha$ and $\beta$ relabeled as $1$ and $s$ now shows that the vertex set of $\mathcal{G}_2$ in the old labeling is $\{\beta, \beta+1, \ldots, k, 1, \ldots, s, \ldots, \alpha \}$ This is
a contradiction, which proves the key claim.

We return to the lemma. For $m=2$ we simply use the key claim. Suppose $m=3$. Let $\mathcal{G}_1, \mathcal{G}_2$ and $\mathcal{G}_3$ be the three cycles. The $m=2$ case tells us that $\mathcal{G}_1 \cup \mathcal{G}_2$ can be decomposed into smaller cycles $\mathcal{C}_1, \ldots, \mathcal{C}_r$. The trick now is to take the union of some $\mathcal{C}_i$ with $\mathcal{G}_3$ and apply the key claim. If $\mathcal{C}_i$ has at most $|\mathcal{C}_i|-2$ directed edges in common with $\mathcal{G}_3$, we are done. Indeed, we can label the vertices so that $\mathcal{G}_3 = (12\ldots k)$ and $\mathcal{C}_i = (1si_3\ldots i_l)$ with $s \neq 2$. Also, $\mathcal{C}_i \neq (1s(s+1)\ldots k)$, otherwise it has $|\mathcal{C}_i|-1$ edges in common with $\mathcal{G}_3$. Hence, the key claim applies. We are left with the case where each $\mathcal{C}_i$ has $|\mathcal{C}_i|-1$ edges in common with $\mathcal{G}_3$. Assume further that  $\mathcal{C}_1, \ldots, \mathcal{C}_r$ are those constructed by the algorithm in the key claim. It is then not difficult to deduce that either $\mathcal{G}_1 = \mathcal{G}_3$ or $\mathcal{G}_2 = \mathcal{G}_3$. This contradicts the assumption that the three graphs are distinct.
\end{proof}

\begin{prop}
\label{integral}
The algebra $\C[c_*]$ is integral over its subalgebra $\C[P_*]$.
\end{prop}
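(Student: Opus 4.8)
The plan is to prove the stronger statement that every single cycle monomial $c_\pi \in \C[X]$ is integral over $\C[P_*]$, by induction on the length $k$ of the cycle $\pi$. Since the elements of $\C[X]$ that are integral over $\C[P_*]$ form a ring, and $\C[c_*]$ is generated as a $\C$-algebra by the $c_\pi$, this immediately yields that $\C[c_*]$ is integral over $\C[P_*]$. (Recall from Proposition \ref{pm-cs formula} that $\C[P_*] = \C[C_*]$, and $\C[C_*] \subseteq \C[c_*]$ because each cycle-sum $C_I$ is by definition a $\C$-linear combination of cycles; so the assertion even makes sense as an integrality statement inside $\C[c_*]$.)

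The base cases $k \le 2$ are immediate: $c_i = x_{ii} = C_i$, and $c_{(ij)} = x_{ij}x_{ji} = C_{\{i,j\}}$, so these cycles already lie in $\C[C_*]$. For the inductive step, fix $k \ge 3$ and assume that every cycle of length $< k$ is integral over $\C[P_*]$. Let $R$ be the subring of $\C[X]$ generated by $\C[C_*]$ together with all cycles of length $< k$. By the inductive hypothesis every generator of $R$ is integral over $\C[P_*]$, hence $R$ itself is integral over $\C[P_*]$.

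Now I would fix a $k$-cycle $\pi$ with support $I$, and let $\pi = \pi_1, \pi_2, \ldots, \pi_N$ (where $N = (k-1)!$) be the list of all cycles with support $I$. Consider the monic polynomial
$$ f(t) \;=\; \prod_{i=1}^{N} \bigl(t - c_{\pi_i}\bigr) \;=\; \sum_{j=0}^{N} (-1)^j \, e_j\bigl(c_{\pi_1}, \ldots, c_{\pi_N}\bigr)\, t^{N-j} , $$
which vanishes at $t = c_\pi$. Its coefficients are, up to sign, the elementary symmetric functions $e_j$ of the $c_{\pi_i}$, and the claim is that each of these lies in $R$. For $j = 0$ this is trivial, and $e_1 = \sum_i c_{\pi_i} = C_I \in \C[C_*] \subseteq R$. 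For $j \ge 2$, every monomial of $e_j$ is a product $c_{\pi_{i_1}} \cdots c_{\pi_{i_j}}$ of $j \ge 2$ \emph{distinct} cycles of length $k \ge 3$ all sharing the support $I$; by Lemma \ref{cycle-decomp} such a product can be rewritten as a product of cycles of length strictly less than $k$, and therefore lies in $R$. Hence $f \in R[t]$, so $c_\pi$ is integral over $R$; since $R$ is integral over $\C[P_*]$, transitivity of integrality shows that $c_\pi$ is integral over $\C[P_*]$, completing the induction.

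The substantive difficulty — turning a product of equi-support long cycles into a product of shorter ones — is exactly Lemma \ref{cycle-decomp}, which has already been proved, so the remaining work is essentially bookkeeping. The only point that needs a moment's attention is checking that Lemma \ref{cycle-decomp} applies verbatim to each monomial appearing in $e_j$ for $j \ge 2$: this holds because the summation indices $i_1 < \cdots < i_j$ guarantee that the $j$ cycles in the monomial are pairwise distinct, which is precisely the hypothesis the lemma requires.
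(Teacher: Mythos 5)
Your proof is correct and essentially identical to the paper's: both consider the monic polynomial $\prod_{\sigma \in \mathfrak{C}_I}(z - c_\sigma)$, observe that its linear coefficient is (up to sign) the cycle-sum $C_I$, invoke Lemma \ref{cycle-decomp} to push the higher coefficients into the ring generated by shorter cycles, and then climb the tower of subrings by transitivity of integrality. The paper packages the induction as a chain $R_2 \subseteq R_3 \subseteq \cdots \subseteq R_n$ with $R_k = \C[P_*, \{c_\pi\}_{|\pi|\le k}]$, whereas you phrase it as induction on cycle length, but these are the same argument.
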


\begin{proof}
  Let $R_k = \C[P_*, \{c_\pi\}_{|\pi|\leq k}] \subset \C[X]$ be the subring generated by the principal minors and cycles of length at most $k$. Note that $R_n = \C[c_*]$ and $R_2 = \C[P_*]$ since $c_{(ij)} = P_iP_j-P_{ij}$ for all distinct $i,j \in [n]$. Thus, it suffices to show that $R_{k}$ is integral over $R_{k-1}$ for all $3 \leq k \leq n$. In particular, we need to show that each $k$-cycle $c_{\pi}$ is the root of a monic polynomial in $R_{k-1}[z]$ where $z$ is an indeterminate. 

We claim that 
    the monic polynomial $p(z) = \prod_{\pi \in \mathfrak{C}_I} (z-c_\pi)$ is in $R_{k-1}[z]$
    for all $I \subseteq [n]$.
  Indeed, the coefficient of $z^{N-d}$, $1 \leq d \leq N = |\mathfrak{C}_I|$, in $p(z)$ is
\begin{eqnarray*}
\alpha_d \,\,\, = \,\,\,  (-1)^d \! \sum_{\{\pi_1, \ldots, \pi_d\} \subseteq \mathfrak{C}_I} c_{\pi_1}c_{\pi_2}\cdots c_{\pi_d}.
\end{eqnarray*}
Observe that $\alpha_1 = -C_{[k]}$ which  lies in $\C[P_*] \subseteq R_{k-1}$
by Proposition \ref{pm-cs formula}. For $d>1$, we apply Lemma \ref{cycle-decomp} 
which implies that each monomial in $\alpha_d$ can be expressed as a product of smaller cycles. This shows that $\alpha_d \in R_{k-1}$. 
\end{proof}

\begin{cor}
\label{bounded_cycle}
If $\{A_k\}_{k>0}$ is a sequence of complex $n {\times} n$-matrices whose principal minors are bounded, then the cycles $c_\pi(A_k)$ are also bounded.
\end{cor}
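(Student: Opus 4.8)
The plan is to read this off directly from Proposition \ref{integral}. Fix a permutation $\pi$ with support $I$. By Proposition \ref{integral}, the cycle $c_\pi$ is integral over $\C[P_*]$; concretely, the monic polynomial $p(z) = \prod_{\rho \in \mathfrak{C}_I}(z - c_\rho)$ constructed in that proof has all of its coefficients in $\C[P_*]$ and vanishes at $z = c_\pi$. Expanding those coefficients as polynomial functions of the principal minors yields an identity
\[
c_\pi^N + g_{N-1}(P_*)\, c_\pi^{N-1} + \cdots + g_1(P_*)\, c_\pi + g_0(P_*) \,=\, 0
\]
in $\C[X]$, where $N = |\mathfrak{C}_I|$ and each $g_j$ is a polynomial in $2^n$ variables.

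Next I would evaluate this identity at the matrices $A_k$ of the sequence. The hypothesis that the principal minors of the $A_k$ are bounded means precisely that the set $\{P_*(A_k) : k>0\}$ is contained in a bounded subset $S$ of $\C^{2^n}$. Since each $g_j$ is a polynomial, it is bounded on $S$, so there is a constant $M$, independent of $k$, with $|g_j(P_*(A_k))| \le M$ for all $j$ and all $k$.

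Finally I would invoke the elementary fact that the roots of a monic polynomial are controlled by its coefficients: if $z^N + a_{N-1}z^{N-1} + \cdots + a_0 = 0$ with $|a_j| \le M$ for every $j$, then $|z| \le 1 + M$, since $|z| > 1$ would force $|z|^N \le M \sum_{j=0}^{N-1} |z|^j < M\, |z|^N / (|z|-1)$ and hence $|z| - 1 < M$. Applying this with $z = c_\pi(A_k)$ gives $|c_\pi(A_k)| \le 1 + M$ for all $k$. As there are only finitely many permutations $\pi$, taking the maximum of the resulting constants yields a single bound valid for all cycles, which is the assertion.

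The corollary is therefore a formal consequence of the integrality established in Proposition \ref{integral}, and I expect no genuine obstacle. The only point that needs a moment's care is the uniformity of the constant $M$ in $k$; but this is automatic, because "bounded principal minors" already packages the whole sequence $\{P_*(A_k)\}$ into one bounded set, on which every coefficient polynomial $g_j$ attains a finite supremum.
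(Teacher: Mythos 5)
Your argument is correct and matches the paper's own one-line proof, which likewise invokes Proposition \ref{integral} to obtain a monic polynomial over $\C[P_*]$ satisfied by $c_\pi$ and then concludes boundedness; you merely fill in the elementary estimate that roots of a monic polynomial with bounded coefficients are bounded, a step the paper leaves implicit. One small slip to fix: the specific polynomial $p(z)=\prod_{\rho\in\mathfrak{C}_I}(z-c_\rho)$ constructed inside the proof of Proposition \ref{integral} has its coefficients in $R_{k-1}$ (which still involves shorter cycles), not directly in $\C[P_*]$; you should instead appeal to the \emph{statement} of the proposition, namely that $\C[c_*]$ is integral over $\C[P_*]$ (by transitivity of integrality along the tower $R_2\subset R_3\subset\cdots\subset R_n$), to get a monic polynomial whose coefficients genuinely lie in $\C[P_*]$.
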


\begin{proof}
 Proposition \ref{integral} implies that $c_\pi$ satisfies a monic polynomial with coefficients in $\C[P_*]$. Since the principal minors are bounded, these coefficients are also bounded, so the same is true for $c_\pi$.
\end{proof}

\begin{proof}[Proof of Theorem \ref{thm:closure}]
Suppose $\{A_k\}_{k>0}$ is a sequence of complex $n {\times} n$-matrices whose principal minors tend to $u_* \in \C^{2^n}$. Since the cycle
values $c_\pi(A_k)$ are bounded, we can 
pass to a subsequence and assume that the sequence
of values  for each cycle converges to a complex number
$v_{\pi}$. Lemma \ref{cyclemapclosed} below states that the image
of the cycle map is closed. Hence there exists
an $n {\times} n$-matrix $A$ such that
$c_\pi(A) = v_{\pi}$ for all cycles. The limit minor $u_I$ is expressed in terms of the $v_{\pi}$ using the formula (\ref{eq:pmcs1}). We conclude that the principal minors of the matrix $A$ satisfy
$P_I(A) = u_I$ for all~$I$.
\end{proof}

The following lemma concludes the proof of
Theorem \ref{thm:closure} and this section.

\begin{lem} \label{cyclemapclosed}
Let $M$ be the number of cycles and consider the map
$\gamma :  \C^{n^2} \rightarrow \C^M$
whose coordinates are the cycle monomials $c_\pi$ in $\C[X]$.
Then the image of the monomial map $\gamma$ is a closed subset of $\C^M$.
(So, it is a toric variety).
\end{lem}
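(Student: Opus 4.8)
The plan is to show that $\im\gamma$ equals the affine toric variety $Y := \overline{\gamma((\C^*)^{n^2})}$, which is automatically closed; the parenthetical assertion then comes for free. Since each $c_\pi$ involves only the off-diagonal entries of $X$, I identify $\gamma$ with a map out of the space $\C^E$ of off-diagonal matrices, where $E = \{(i,j) : i\ne j\}$ is the edge set of the complete digraph on $[n]$; a cycle $\pi$ (of length $\ge 2$) is then a simple directed cycle in $E$, and $c_\pi = \prod_{e\in\pi}x_e$. (If the $n$ diagonal coordinates $x_{ii}$ are also counted among the $c_\pi$, they merely contribute a trivial direct factor $\C^n$ and may be ignored.)

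The inclusion $\im\gamma \subseteq Y$ is soft. Any binomial $\prod_i y_{\pi_i} - \prod_j y_{\sigma_j}$ with $\bigsqcup_i\pi_i = \bigsqcup_j\sigma_j$ as multisets of directed edges pulls back, under $y_\pi\mapsto c_\pi$, to $\prod_e x_e^{m_e} - \prod_e x_e^{m_e} = 0$, where $m_e$ is the multiplicity of $e$ in the common multiset; hence all such binomials vanish identically on $\im\gamma$, and therefore on $\overline{\im\gamma}$. Moreover, replacing the zero entries of an arbitrary $x\in\C^E$ by a parameter $t$ and letting $t\to 0$ shows $\gamma(x)\in\overline{\gamma((\C^*)^E)}$; so $\overline{\im\gamma} = \overline{\gamma((\C^*)^E)} = Y$, an affine toric variety (the Zariski closure of the image of the torus homomorphism $\gamma|_{(\C^*)^E}$). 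It remains to prove $Y\subseteq\im\gamma$.

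Fix $v = (v_\pi)\in Y$, and set $S = \{\pi : v_\pi\ne 0\}$ and $T = \bigcup_{\pi\in S}\pi \subseteq E$. The crux is the combinatorial claim that a cycle $\pi$ lies in $S$ precisely when every edge of $\pi$ lies in $T$. The forward implication is immediate. For the converse, pick for each edge $e$ of $\pi$ a cycle $\sigma_e\in S$ through $e$. The directed multigraph $H = \bigsqcup_{e\in\pi}\sigma_e$ is balanced (a union of directed cycles) and contains $\pi$ as a sub-multigraph, so $H\setminus\pi$ is balanced and hence decomposes into directed cycles $\tau_1,\dots,\tau_m$; this yields the multiset identity $\bigsqcup_{e\in\pi}\sigma_e = \pi\sqcup\tau_1\sqcup\cdots\sqcup\tau_m$, i.e. a binomial relation $\prod_{e\in\pi}c_{\sigma_e} = c_\pi\prod_\ell c_{\tau_\ell}$. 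Evaluating it at $v$, the left side is nonzero because every $\sigma_e\in S$, so $v_\pi\ne 0$, i.e. $\pi\in S$. This Eulerian-decomposition step — deducing that $\pi\in S$ from the mere fact that each edge of $\pi$ is used by some cycle of $S$ — is the heart of the argument and the only place the cycle structure is essential; for arbitrary monomial maps the image need not be closed. I expect making this step airtight to be the main obstacle.

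Finally I reduce to the torus. By the claim the cycles contained in $T$ are exactly those in $S$, so $\rho : (\C^*)^T\to(\C^*)^S$, $\rho(x)_\pi = \prod_{e\in\pi}x_e$, is a well-defined torus homomorphism; its image $H_0$ is a Zariski-closed subgroup, equal to the common kernel of the characters of $(\C^*)^S$ that vanish on it. Such a character $w\mapsto\prod_\pi w_\pi^{u_\pi}$ is trivial on $H_0$ iff $\sum_\pi u_\pi\mathbf{1}_\pi = 0$ in $\Z^E$ (writing $\mathbf{1}_\pi$ for the edge-incidence vector of $\pi$); splitting $u$ into positive and negative parts turns this into a multiset identity among $S$-cycles, hence into a binomial that $v$ satisfies, so $\prod_\pi v_\pi^{u_\pi} = 1$. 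Therefore $(v_\pi)_{\pi\in S}\in H_0$: choose $x\in(\C^*)^T$ with $\rho(x) = (v_\pi)_{\pi\in S}$, and extend $x$ by zero on $E\setminus T$. Then $c_\pi(x) = v_\pi$ for $\pi\in S$, while for $\pi\notin S$ the cycle $\pi$ meets $E\setminus T$ by the claim, so $c_\pi(x) = 0 = v_\pi$. Hence $v\in\im\gamma$, and $\im\gamma = Y$ is closed. The only non-elementary inputs are the standard facts that a homomorphic image of a torus is Zariski-closed and is cut out by the characters trivial on it.
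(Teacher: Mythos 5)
Your proof is correct, and it takes a genuinely different route from the paper's. The paper does not argue combinatorially at all: it observes that the cycle monomials (together with the diagonal entries $x_{ii}$, which you rightly peel off as a trivial $\C^n$ factor) generate the ring of invariants of the conjugation action $X \mapsto D X D^{-1}$ of the diagonal torus $(\C^*)^n$ on $\C^{n^2}$, so that $\gamma$ is, up to the closed embedding of $\operatorname{Spec}$ of the invariant ring into $\C^M$, the GIT quotient map $\C^{n^2} \to \C^{n^2}/\!/(\C^*)^n$; it then invokes the general criteria of Geiger--Meek--Sturmfels and of Katsabekis--Thoma for when a monomial map between affine spaces has closed image. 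Your argument makes this concrete and self-contained. The Eulerian-decomposition step --- showing that a cycle $\pi$ lies in $S$ as soon as each of its edges is used by some cycle of $S$, via the identity $\bigsqcup_{e\in\pi}\sigma_e = \pi\sqcup\tau_1\sqcup\cdots\sqcup\tau_m$ coming from decomposing the balanced multigraph $H\setminus\pi$ into simple directed cycles --- is exactly the combinatorial content that the cited criteria would verify in this case, and your torus-reduction step (the image of $\rho:(\C^*)^T\to(\C^*)^S$ is a closed subgroup cut out by binomials that $v$ satisfies) then explicitly produces a preimage point, something the paper never does. What the paper's version buys is brevity and a conceptual explanation by invariant theory; what yours buys is elementariness, no external citations, and an explicit lift. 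Both hinge on the same underlying structure (the exponent vectors of the $c_\pi$ minimally generate the solution semigroup of the balancing equations in $\N^{n^2}$), just packaged differently, so this is a legitimate alternative proof rather than a shortcut.
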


\begin{proof}
The general question of when the image of a given monomial map
between affine spaces is closed was studied and answered independently in
 \cite{GMS} and in \cite{KT}. We can apply the characterizations
given in either of these papers to show that the image of
our map $\gamma$ is closed. The key observation is that the cycle
monomials generate the ring of invariants of the 
$(\C^*)^n$-action on $\C[X]$ given by $X \mapsto D \cdot X \cdot D^{-1}$
where $D$ is an invertible diagonal matrix. Equivalently,
the exponent vectors of the monomials $c_\pi$ are the minimal
generators of a subsemigroup of $\N^{n^2}$ that is the solution set of
a system of linear equations on $n^2$ unknowns.
The geometric meaning of this key observation 
is that the monomial map $\gamma$ represents the quotient map
$\, \C^{n^2} \rightarrow \C^{n^2}/ (\C^*)^n$ in the sense
of geometric invariant theory. Now, the results on images
of monomial maps in \cite[\S 3]{GMS} and \cite{KT} ensure that
$\im \gamma$ is closed. 
\end{proof}

\section{Sixty-Five Affine Relations}

We seek to identify generators for the prime ideal $\mathcal{I}_n$ of polynomials in $\C[A_*]$ 
that vanish on the image $\im \phi_a$ of the affine principal minor map. 
Here the $2^n$ coordinates $A_I$ of the vector $A_*$ are regarded as indeterminates.
For $n \leq 3$,  
every vector $u_* \in \C^{2^n}, u_\emptyset = 1,$ is realizable as the principal minors of some $n {\times} n$-matrix, so $\mathcal{I}_n = 0$. In this section, we determine $\mathcal{I}_n$ for the case $n=4$.

Finding relations among the principal minors of a $4 {\times} 4$-matrix is a classical problem posed by MacMahon in 1894 and partially solved by Nanson in 1897 \cite{Ma, M, N}. The relations were discovered by means of ``devertebrated minors'' and trigonometry. Here, we write 
the {\em Nanson relations}
in terms of the cycle-sums. They are the maximal $4 {\times} 4$-minors of the following 
$5 {\times} 4$-matrix:
\begin{eqnarray} \label{5x4matrix}
 \left(
 \begin{array}{cccc}
  C_{123}C_{14} & C_{124}C_{13} & C_{134}C_{12} & 2C_{234}C_{12}C_{13}C_{14}+C_{134}C_{124}C_{123}\\
  C_{124}C_{23} & C_{123}C_{24} & C_{234}C_{21} & 2C_{134}C_{21}C_{23}C_{24}+C_{234}C_{124}C_{123}\\
  C_{134}C_{32} & C_{234}C_{31} & C_{123}C_{34} & 2C_{124}C_{31}C_{32}C_{34}+C_{234}C_{134}C_{123}\\
  C_{234}C_{41} & C_{134}C_{42} & C_{124}C_{43} & 2C_{123}C_{41}C_{42}C_{43}+C_{234}C_{134}C_{124}\\
  1 & 1 & 1 & C_{1234}
 \end{array}
 \right)
\end{eqnarray}
Each of the cycle-sums in this matrix can be rewritten as a 
polynomial in the principal minors $P_I$ using the relations
(\ref{eq:pmcs2}). An explicit example is
$$ C_{123} \,\, = \,\, 2\,A_1 A_2 A_3 
- A_{12} A_3 - A_{13} A_2 -  A_{23} A_1
+ A_{123}. $$
The maximal minors of (\ref{5x4matrix}) give us
five polynomials in the ideal
$\mathcal{I}_4$. Each can be expanded either in terms of
cycle-sums or in terms of principal minors.

Muir \cite{M} and Nanson \cite{N} left open the question 
of whether additional polynomials are needed to generate
the ideal $\mathcal{I}_4$, even up to radical.
We~applied computer algebra methods to answer this question.
In the course of our experimental investigations,
 we discovered the $65$ affine relations that generate the ideal.
 They are the generators of the ideal quotient $\,(\mathcal{K}:g)\,$ where $\mathcal{K}$ is the ideal generated by the five $4 {\times} 4$-minors
  above, and $g$ is the principal 
 $3 {\times} 3$-minor corresponding to the first three rows and columns of  (\ref{5x4matrix}).
Thus the main stepping stone in the proof of Theorem  \ref{thm:affine} is the identity
\begin{equation}
\label{quotientideal}
\mathcal{I}_4 \,\,\, = \,\,\, (\mathcal{K}:g).
\end{equation}
Before proving this, we present a census of the
$65$ ideal generators, and we explain why all
$65$ polynomials are needed and to what extent they are 
uniquely characterized  by the equality of ideals in (\ref{quotientideal}).
The polynomial ring $\C[A_*]$ has $15$ indeterminates that are indexed
by non-empty subsets of $\{1,2,3,4\}$.
It has the following natural multigrading by the group $\Z^4$:
\begin{eqnarray*}
&\deg(A_1) = [1,0,0,0]\,, \, \deg(A_{2}) = [0,1,0,0]\,,\, \ldots \,, \, \deg(A_4) = [0,0,0,1],& \\
&\deg(A_{12}) = [1,1,0,0]\,,\, \ldots\,,\, \deg(A_{234}) = [0,1,1,1]\,,\, \deg(A_{1234}) = [1,1,1,1].& 
\end{eqnarray*}
This $\Z^4$-grading is a positive grading, i.e., each graded component is
a finite-dimensional $\C$-vector space.
Both the ideal $ \mathcal{K}$ and the prime ideal $\mathcal{I}_4$  are homogeneous
in this $\Z^4$-grading. This means that the minimal generators of both ideals
can be chosen to be $\Z^4$-homogeneous, and their number is unique.

Our computation revealed that this number of generators is $65$. Moreover,
we found that the generators lie in $63$ distinct $\Z^4$-graded components.
The component in degree $[5,5,5,5]$ happens to be three-dimensional. We chose
 a $\C$-basis for this $3$-dimensional space of polynomials.
All $62$ other components are one-dimensional,
and these give rise to generators with integer coefficients and content $1$ that are unique up to sign.
The complete census of all $65$ generators is presented in Table \ref{summary}.
For each generator we list its multidegree and its
number of monomials (``size'') in the two expansions,
namely, in terms of cycle-sums $C_I$ and in terms of principal minors $A_I$.

The first four rows of Table \ref{summary}  refer to  the four maximal minors
of the matrix (\ref{5x4matrix}) which involve the last row.
The expansion of any of these four minors in terms of cycle-sums has $32$ monomials and is of total degree $8$. However, the expansion of that polynomial 
in terms of principal minors $A_I$ is much larger: it has  $5234$ monomials.

\begin{table}
\caption{Multidegrees of the $65$ minimal generators of $\mathcal{I}_4$}
\label{summary}
\begin{center}
\begin{tabular}{|ccccc|}
\hline
& \multicolumn{2}{c}{\footnotesize{Cycle-sums}} & \multicolumn{2}{c|}{\footnotesize{Principal Minors}} \\
\footnotesize{No.} & \footnotesize{Size} & \footnotesize{Deg.} & \footnotesize{Size} & \footnotesize{Multidegree} \\
\hline
 1 & 32 & 8 & 5234 & [4, 5, 5, 5] \\
 2 & 32 & 8 & 5234 & [5, 4, 5, 5] \\
 3 & 32 & 8 & 5234 & [5, 5, 4, 5] \\
 4 & 32 & 8 & 5234 & [5, 5, 5, 4] \\
 \hline
 5 & 42 & 9 & 4912 & [4, 4, 6, 6] \\
 6 & 42 & 9 & 4912 & [4, 6, 4, 6] \\
 7 & 42 & 9 & 4912 & [4, 6, 6, 4] \\
 8 & 42 & 9 & 4912 & [6, 4, 4, 6] \\
 9 & 42 & 9 & 4912 & [6, 4, 6, 4] \\
10 & 42 & 9 & 4912 & [6, 6, 4, 4] \\
\hline
11 & 80 & 9 & 5126 & [4, 5, 5, 6] \\
12 & 80 & 9 & 5126 & [4, 5, 6, 5] \\
13 & 80 & 9 & 5126 & [4, 6, 5, 5] \\
14 & 80 & 9 & 5126 & [5, 4, 5, 6] \\
15 & 80 & 9 & 5126 & [5, 4, 6, 5] \\
16 & 80 & 9 & 5126 & [5, 5, 4, 6] \\
17 & 80 & 9 & 5126 & [5, 5, 6, 4] \\
18 & 80 & 9 & 5126 & [5, 6, 4, 5] \\
19 & 80 & 9 & 5126 & [5, 6, 5, 4] \\
20 & 80 & 9 & 5126 & [6, 4, 5, 5] \\
21 & 80 & 9 & 5126 & [6, 5, 4, 5] \\
22 & 80 & 9 & 5126 & [6, 5, 5, 4] \\
\hline
23 & 116 & 9 & 5656 & [5, 5, 5, 5] \\
24 & 116 & 9 & 5656 & [5, 5, 5, 5] \\
25 & 116 & 9 & 5656 & [5, 5, 5, 5] \\
\hline
26 & 91 & 10 & 6088 & [3, 6, 6, 6] \\
27 & 91 & 10 & 6088 & [6, 3, 6, 6] \\
28 & 91 & 10 & 6088 & [6, 6, 3, 6] \\
29 & 91 & 10 & 6088 & [6, 6, 6, 3] \\
\hline
30 & 834 & 11 & 5779 & [5, 5, 5, 7] \\
31 & 834 & 11 & 5779 & [5, 5, 7, 5] \\
32 & 834 & 11 & 5779 & [5, 7, 5, 5] \\
33 & 834 & 11 & 5779 & [7, 5, 5, 5] \\
\hline
\end{tabular}
\begin{tabular}{|ccccc|}
\hline
& \multicolumn{2}{c}{\footnotesize{Cycle-sums}} & \multicolumn{2}{c|}{\footnotesize{Principal Minors}} \\
\footnotesize{No.} & \footnotesize{Size} & \footnotesize{Deg.} & \footnotesize{Size} & \footnotesize{Multidegree} \\
\hline
34 & 163 & 10 & 5234 & [4, 5, 5, 7] \\
35 & 163 & 10 & 5234 & [4, 5, 7, 5] \\
36 & 163 & 10 & 5234 & [4, 7, 5, 5] \\
37 & 163 & 10 & 5234 & [5, 4, 5, 7] \\
38 & 163 & 10 & 5234 & [5, 4, 7, 5] \\
39 & 163 & 10 & 5234 & [5, 5, 4, 7] \\
40 & 163 & 10 & 5234 & [5, 5, 7, 4] \\
41 & 163 & 10 & 5234 & [5, 7, 4, 5] \\
42 & 163 & 10 & 5234 & [5, 7, 5, 4] \\
43 & 163 & 10 & 5234 & [7, 4, 5, 5] \\
44 & 163 & 10 & 5234 & [7, 5, 4, 5] \\
45 & 163 & 10 & 5234 & [7, 5, 5, 4] \\
\hline
46 & 254 & 10 & 5558 & [4, 5, 6, 6] \\
47 & 254 & 10 & 5558 & [4, 6, 5, 6] \\
48 & 254 & 10 & 5558 & [4, 6, 6, 5] \\
49 & 214 & 10 & 6716 & [5, 4, 6, 6] \\
50 & 214 & 10 & 6716 & [5, 6, 4, 6] \\
51 & 214 & 10 & 6716 & [5, 6, 6, 4] \\
52 & 254 & 10 & 5558 & [6, 4, 5, 6] \\
53 & 254 & 10 & 5558 & [6, 4, 6, 5] \\
54 & 254 & 10 & 5558 & [6, 5, 4, 6] \\
55 & 254 & 10 & 5558 & [6, 5, 6, 4] \\
56 & 254 & 10 & 5558 & [6, 6, 4, 5] \\
57 & 254 & 10 & 5558 & [6, 6, 5, 4] \\
\hline
58 & 354 & 10 & 5993 & [5, 5, 5, 6] \\
59 & 354 & 10 & 5993 & [5, 5, 6, 5] \\
60 & 354 & 10 & 5993 & [5, 6, 5, 5] \\
61 & 364 & 10 & 8224 & [6, 5, 5, 5] \\
\hline
62 & 685 & 11 & 5915 & [4, 6, 6, 6] \\
63 & 685 & 11 & 5915 & [6, 4, 6, 6] \\
64 & 685 & 11 & 5915 & [6, 6, 4, 6] \\
65 & 685 & 11 & 5915 & [6, 6, 6, 4] \\
   & & &      & \\ 
\hline
\end{tabular}
\end{center}
\end{table}

\begin{proof}[Proof of Theorem \ref{thm:affine}]
We compute the ideal $(\mathcal{K}:g)$ and find
that it has the $65$ minimal generators above.
We check that each of the five generators of $\mathcal{K}$
vanishes on $\im \phi_a$ but $g$ does not vanish on $\im \phi_a$.
This implies 
$ (\mathcal{K}:g) \subseteq \mathcal{I}_4$.
To prove the reverse inclusion we argue as follows.
Computation of a Gr\"obner basis in terms of cycle-sums reveals
that  $(\mathcal{K}:g)$ is an ideal of codimension $2$,
and we know that this is also the codimension of the prime ideal $\mathcal{I}_4$.
Therefore $\mathcal{I}_4$ is a minimal associated prime of  $(\mathcal{K}:g)$.
To complete the proof, it therefore suffices to show that $(\mathcal{K}:g)$ is a prime ideal.
We do this using the following lemma:

\begin{lem}
\label{stillman}
Let $J \subset k[x_1,x_2, \ldots, x_n]$ be an ideal containing a polynomial $f=gx_1+h$, with $g,h$ not involving $x_1$ and $g$ a non-zero divisor modulo~$J$. Then, $J$ is prime if and only if the elimination ideal
$J \cap k[x_2, \ldots, x_n]$ is prime.
\end{lem}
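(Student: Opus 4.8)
The plan is to use the polynomial $f = gx_1 + h$ to set up an isomorphism that identifies the quotient ring $k[x_1,\dots,x_n]/J$ with a localization of the quotient by the elimination ideal, which reduces primality to the elimination ideal case. First I would pass to the localization $S = (k[x_2,\dots,x_n]/J')_g$, where $J' = J \cap k[x_2,\dots,x_n]$ is the elimination ideal; since $g$ is a nonzerodivisor modulo $J$ and $g \in k[x_2,\dots,x_n]$, one checks $g$ is also a nonzerodivisor modulo $J'$, so this localization is honest and is a domain precisely when $J'$ is prime (localization of a domain is a domain, and conversely $k[x_2,\dots,x_n]/J'$ embeds into its localization at the nonzerodivisor $g$).

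The key step is then to show $k[x_1,\dots,x_n]/J \cong S$. In $k[x_1,\dots,x_n]/J$ the relation $f = 0$ reads $x_1 = -h/g$, which becomes legitimate after inverting $g$; so I would argue that the composite $k[x_2,\dots,x_n] \hookrightarrow k[x_1,\dots,x_n] \to (k[x_1,\dots,x_n]/J)_g$ is surjective (because $x_1$ maps into the image of $k[x_2,\dots,x_n]$ once $g$ is invertible) with kernel exactly $J'$ — after possibly enlarging $J'$ by the saturation with respect to $g$, but that saturation equals $J'$ again by the nonzerodivisor hypothesis. This gives $(k[x_1,\dots,x_n]/J)_g \cong (k[x_2,\dots,x_n]/J')_g = S$. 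Finally, because $g$ is a nonzerodivisor modulo $J$, the ring $k[x_1,\dots,x_n]/J$ injects into its localization $(k[x_1,\dots,x_n]/J)_g \cong S$; hence $k[x_1,\dots,x_n]/J$ is a domain iff $S$ is a domain iff $J'$ is prime.

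The one subtlety worth spelling out is the claim that inverting $g$ does not destroy information, i.e. that $J = (J + (f)) : g^\infty$ in an appropriate sense and that the two saturations (upstairs and downstairs) match up; this is exactly where the hypothesis that $g$ is a nonzerodivisor modulo $J$ is used, and it also forces $g$ to be a nonzerodivisor modulo $J'$. I expect this bookkeeping with saturations and the surjectivity of the map onto the localization to be the main obstacle — not deep, but the step that needs care. The rest is the standard fact that a ring embedding into a domain (resp. a localization) transports and reflects the domain property, which is immediate.

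Once Lemma \ref{stillman} is in hand, I would apply it to $J = (\mathcal{K}:g)$ with $x_1$ chosen to be a suitable coordinate (in the cycle-sum presentation) appearing linearly in one of the generators, use the earlier verification that $g$ is a nonzerodivisor modulo $(\mathcal{K}:g)$, and reduce to checking primality of a smaller elimination ideal by computer algebra — iterating if necessary. Combined with the codimension count already established, this yields that $(\mathcal{K}:g)$ is prime, completing the proof of Theorem \ref{thm:affine}.
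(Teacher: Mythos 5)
The paper does not prove this lemma: it attributes it to M.~Stillman and cites \cite[Prop.~4.4(b)]{St}, so there is no internal proof for you to compare against. That said, your proof strategy is the standard localization argument that one would expect to find in Stillman's notes, and it is essentially correct: inverting $g$ makes $x_1 = -h/g$ redundant, identifying the two localizations, while the nonzerodivisor hypothesis guarantees that both $k[x_1,\dots,x_n]/J$ and $k[x_2,\dots,x_n]/J'$ embed into their localizations at $g$, so the domain property transfers back and forth.

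One statement in your argument is imprecise and, as written, false: the composite $k[x_2,\dots,x_n]\hookrightarrow k[x_1,\dots,x_n]\to \bigl(k[x_1,\dots,x_n]/J\bigr)_g$ is \emph{not} surjective, because $1/g$ lies in the target but not in the image of a map out of a polynomial ring. What is surjective is the extension of this map to $k[x_2,\dots,x_n]_g\to \bigl(k[x_1,\dots,x_n]/J\bigr)_g$: the image then contains $x_2,\dots,x_n$, $1/g$, and hence $x_1=-h/g$, so it contains everything. The kernel of that extended map is exactly $J'_g$ (an element $p/g^m$ is killed iff $g^N p\in J$ for some $N$, hence $g^N p\in J'$, hence $p\in J'$ by the nonzerodivisor hypothesis on $J'$), giving $\bigl(k[x_1,\dots,x_n]/J\bigr)_g\cong\bigl(k[x_2,\dots,x_n]/J'\bigr)_g$. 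Your phrasing ``surjective with kernel exactly $J'$'' would instead yield an isomorphism onto $k[x_2,\dots,x_n]/J'$ without the localization, which contradicts your own conclusion. This is a one-line fix, but worth correcting because surjectivity-onto-a-localization is precisely the kind of claim one should not let slide. Also, you should note explicitly that when $J$ is the unit ideal the statement is vacuously true, so that the nondegeneracy of the localizations is guaranteed. Your closing paragraph on how to deploy the lemma matches the paper's application, except that the paper works directly with $A_{1234}$ in the principal-minor presentation (taking $f$ to be the $4\times 4$-minor of the matrix (\ref{5x4matrix}) obtained by deleting the fourth row), not in the cycle-sum coordinates, and it applies the lemma once rather than iterating.
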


Lemma \ref{stillman} is due to M.~Stillman and appears in  \cite[Prop. 4.4(b)]{St}.
We apply this lemma to our ideal $J = (\mathcal{K}:g)$ in the
polynomial ring $\C[A_*]$, with $x_1 = A_{1234}$ as the special variable,
and we take the special polynomial $f$ to be the $4 {\times} 4$-minor 
of the matrix  (\ref{5x4matrix}) formed by deleting the fourth row.

We have $\,f \,= \, gA_{1234}+h$ where $g,h$ are polynomials that
do not involve $A_{1234}$. A computation verifies that
$(J:g) = J$, so $g$ is not a zero-divisor modulo $J$.
It remains to show that the elimination ideal $J \cap \C[A_* \backslash A_{1234}]$ is prime.
Now, since $J$ has codimension two, this elimination ideal is principal.
Indeed, its generator is the $4 {\times} 4$-minor of (\ref{5x4matrix})
given by the first four rows. This polynomial has $\Z^4$-degree $[6,6,6,6]$. We  check using computer algebra that it is absolutely irreducible, and
conclude that $J $ is prime.
\end{proof}

\section{A Pinch of Representation Theory}
\label{sec:PRT}

In this section we prove Theorem \ref{thm:projective},
and we explicitly determine the
$14$ polynomials of degree $12$ 
that serve as highest weight vectors for
the  relevant irreducible $G$-modules.
We begin by describing the
general setting for  $n \geq 4$.

Let $\mathcal{V}_n \subset \PP^{2^n-1}$ be the image of the projective principal minor map $\phi$, 
and let $\mathcal{J}_n \in \C[A_*]$ be the homogeneous prime ideal of polynomials 
in $2^n$ unknowns $A_I$ that vanish on $\mathcal{V}_n$.
Clearly, $\mathcal{J}_n$ is invariant under the action of $\mathfrak{S}_n$ on $\C[A_*]$ which comes from permuting the rows and columns of the $n {\times} n$ -matrix. Let ${\rm GL}_2(\C)$ denote the group of invertible complex $2 {\times} 2$-matrices, and consider the vector space $V = V_1 \otimes V_2 \otimes \cdots \otimes V_n$ where each $V_i \simeq \C^2$ with basis $e^{i}_0, e^{i}_1$. We identify
a basis vector $e^{1}_{j_1} \otimes e^{2}_{j_2} \otimes \cdots \otimes e^{n}_{j_n}$ of
the tensor product $V$ with the unknown $A_I$ where $i \in I$ if and only if $j_i = 1$, for all $i$. 

The natural action of the $n$-fold product $G = {\rm GL}_2(\C) {\times} \cdots {\times} {\rm GL}_2(\C)$
 on $V \simeq \C^{2^n}$ extends to its coordinate ring $\C[A_*]$. This action commutes with the 
 map $\,\phi\,$ in (\ref{projectivephi}). Here $G$ acts on the
parameter space of  $n {\times} 2n$-matrices $(A,B)$
by having its $i$-th factor ${\rm GL}_2(\C)$ act on the
$n {\times} 2$-matrix formed by the $i$-th columns of $A$ and $B$.
This argument, due to J. M. Landsberg,
shows that  the prime ideal $\mathcal{J}_n$ is invariant under $G$.
See also \cite[\S 6]{HS} and \cite[Thm.~1.1]{O}.

\begin{cor}
The set $\mathcal{V}_n $ is closed in $ \PP^{2^n-1}$, {\it i.e.}~it is a projective
variety.
\end{cor}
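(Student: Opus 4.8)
The plan is to deduce this corollary from the closure theorem for the affine principal minor map (Theorem \ref{thm:closure}) by a standard cone-versus-projectivization argument. First I would observe that $\mathcal{V}_n = \im \phi$ is, by construction, a cone: the parameter space $\C^{n^2} \times \C^{n^2}$ carries a scaling action $(A,B) \mapsto (\lambda A, \mu B)$ under which each coordinate $\det(A_I B_{[n]\setminus I})$ scales by $\lambda^{|I|}\mu^{n-|I|}$, so the image is stable under the $\C^*$-scaling of $\C^{2^n}$. Hence it suffices to show that $\im\phi$ is closed as an affine cone in $\C^{2^n}$; then it automatically descends to a closed (= projective) subvariety of $\PP^{2^n-1}$.

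Next I would relate the projective map $\phi$ to the affine map $\phi_a$. Setting $B = I_n$ in (\ref{projectivephi}) gives exactly $\phi(A, I_n) = (\det(A_I))_{I} = \phi_a(A)$, so $\im\phi_a \subseteq \im\phi$. Conversely, on the open locus where $B$ is invertible, $\det(A_I B_{[n]\setminus I}) = \det(B) \cdot \det\bigl((B^{-1}A)_I\bigr) \cdot (\text{sign/rescaling})$, more precisely $\det(A_I B_{[n]\setminus I}) = \pm\det(B)\,(B^{-1}A)_I$, so $\phi(A,B) = \pm\det(B)\cdot \phi_a(B^{-1}A)$. This shows that the points of $\im\phi$ coming from pairs with $B$ invertible all lie in $\C^* \cdot \im\phi_a$ (up to the harmless sign, which is absorbed by $\lambda \mapsto -\lambda$). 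Thus $\overline{\C^* \cdot \im\phi_a}$ contains the ``generic'' part of $\im\phi$, and since $\im\phi_a$ is closed by Theorem \ref{thm:closure}, the affine cone over it, namely $\overline{\C^*\cdot\im\phi_a}$, is also closed (the cone over a closed affine variety is closed — one may also note $\im\phi_a$ is irreducible, so its cone is the irreducible variety $\mathcal{V}_n$).

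The remaining point is to handle the degenerate pairs, i.e.\ those with $\det(B)=0$ (and symmetrically $\det(A)=0$), which a priori could produce points of $\im\phi$ outside the cone over $\im\phi_a$. I would argue by a limiting/sequential argument: given any $(A,B)$ with $B$ singular, perturb $B$ to $B_k = B + \tfrac1k I_n$ (invertible for large $k$); then $\phi(A,B_k) \to \phi(A,B)$ by continuity, and each $\phi(A,B_k)$ lies in $\C^*\cdot\im\phi_a$, which is closed. Hence $\phi(A,B) \in \C^*\cdot\im\phi_a$ as well. This shows $\im\phi = \C^*\cdot\im\phi_a \cup \{0\} = $ the affine cone over $\im\phi_a$, which is closed; therefore $\mathcal{V}_n$ is a projective variety. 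The main obstacle is making the degenerate-pair argument airtight — one must check that the limit of the rescaling factors $\det(B_k)\phi_a(B_k^{-1}A)$ stays in the closed cone even though $\det(B_k)\to 0$ and $B_k^{-1}A$ may blow up; this is exactly where the closure of $\im\phi_a$ (hence of its cone) does the work, since the product stays in the closed set $\C^*\cdot\im\phi_a\cup\{0\}$ throughout.
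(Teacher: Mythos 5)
There is a genuine gap here, and it is a fatal one. Your proof hinges on the claim that $\im\phi = \C^*\cdot\im\phi_a \cup \{0\}$ and on the auxiliary assertion that ``the cone over a closed affine variety is closed.'' Both are false. For the first: every point of $\phi_a$ has $A_\emptyset = 1$, so every point of $\C^*\cdot\im\phi_a$ has $A_\emptyset \neq 0$. Yet $\im\phi$ contains many nonzero points with $A_\emptyset = \det(B) = 0$; for instance, $\phi(I_n, B)$ with $B$ singular but nonzero has $A_\emptyset = 0$ while some other coordinate is nonzero. Such points lie in $\im\phi$ but not in $\C^*\cdot\im\phi_a \cup \{0\}$. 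For the second: if $X \subset \C^N$ is closed but sits inside an affine hyperplane $\{A_\emptyset = 1\}$, as $\im\phi_a$ does, then $\C^*X \cup \{0\}$ is typically not closed---its closure picks up extra points in $\{A_\emptyset = 0\}$. Your perturbation argument only shows $\phi(A,B) \in \overline{\C^*\cdot\im\phi_a}$, which is automatic; the entire difficulty is to show this closure is no larger than $\im\phi$, and that is exactly what your argument assumes rather than proves.

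The paper's proof supplies the missing ingredient by exploiting the $G = {\rm GL}_2(\C)^n$-invariance of $\mathcal{V}_n = \im\phi$. For each $I \subseteq [n]$ there is $g \in G$ (a product of swaps $e^i_0 \leftrightarrow e^i_1$ for $i \in I$) acting as a coordinate permutation of $\PP^{2^n-1}$ that sends the chart $\{A_\emptyset \neq 0\}$ to $\{A_I \neq 0\}$. Since $\mathcal{V}_n$ is $G$-invariant, each $U_I = \mathcal{V}_n \cap \{A_I \neq 0\}$ is isomorphic to $U_\emptyset = \im\phi_a$, which is closed by Theorem~\ref{thm:closure}. As the charts $\{A_I \neq 0\}$ cover $\PP^{2^n-1}$ and closedness is local, $\mathcal{V}_n$ is closed. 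To repair your argument you would need some mechanism of this kind to control the locus $A_\emptyset = 0$---a dehomogenization with respect to $A_\emptyset$ alone cannot see those points.
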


\begin{proof}
For any index set $I\subseteq [n]$, there exists a group element $g \in G$ which takes $A_\emptyset$ to $A_I$. Thus, every affine piece $\,U_I = \{u_* \in \mathcal{V}_n: u_I \neq 0\}\,$ 
of the constructible set $\,\mathcal{V}_n = \im \phi\,$ is isomorphic to $\,U_\emptyset = \im \phi_a$.
The latter image is closed by Theorem \ref{thm:closure}. Therefore,
$\mathcal{V}_n$ is an irreducible projective variety. 
\end{proof}

The action of  the Lie group $G$ gives rise to an action of the  Lie algebra $\mathfrak{g} = \mathfrak{gl}_2(\C) {\times} \cdots {\times} \mathfrak{gl}_2(\C)$  on the polynomial ring $ \C[A_*]$ by differential operators. Here $\mathfrak{gl}_2(\C)$ 
denotes the ring of complex $2 {\times} 2$-matrices. Indeed, the vector
$$ ({\bf 0}, \ldots, {\bf 0},
\begin{pmatrix}w & x\\y & z\end{pmatrix}, {\bf 0}, \ldots, {\bf 0}) \quad \in \quad \mathfrak{g} ,$$
whose entries are zero matrices of format $2 {\times} 2$ except in the $i$-th coordinate, acts
on the polynomial ring $\C[A_*]$ as the linear differential operator
\begin{eqnarray*}
\sum_{i \notin I} \left( w A_{I}\frac{\partial}{\partial A_{I}}
+ x A_{I\cup \{i\}}\frac{\partial}{\partial A_{I}}
+ y A_{I}\frac{\partial}{\partial A_{I\cup \{i\}}}
+ z A_{I\cup \{i\}}\frac{\partial}{\partial A_{I\cup \{i\}}} \right).
\end{eqnarray*}
Here the sum is over all $I \subseteq [n]$ not containing $i$. We extend this action to all of $\mathfrak{g}$ by linearity. If all the coordinate matrices of an element $h \in \mathfrak{g}$ are 
strictly upper triangular, we say that $h$ is a {\em raising operator}. Similarly, $h$ is a
{\em lowering operator} if all the matrices are strictly lower triangular.

We now focus on the case $n=4$. Let $\mathcal{I}^h$ be the ideal generated by the homogenizations of the 65 affine generators in Theorem \ref{thm:affine} with respect to $A_\emptyset$.
This is a subideal of the homogeneous prime ideal $\mathcal{J} = \mathcal{J}_4$
we are interested in. Our strategy is to identify a suitable intermediate ideal
between $\mathcal{I}^h \subset \mathcal{J}$.

\begin{proof}[Proof of Theorem \ref{thm:projective}]
Let  $\,\mathcal{K} = G\mathcal{I}^h\,$ be the ideal generated by
the image of $\mathcal{I}^h$ under the group $G$.
Then $\mathcal{K}$ is a $G$-invariant ideal of $\C[A_*]$
that is contained in the prime ideal $\mathcal{J}$.
Since $G$ acts transitively on the affine charts  $U_I$,
and since $\mathcal{I}^h$ coincides with the unknown ideal $\mathcal{J}$
on the chart $U_\emptyset$,
we conclude that the ideal $\mathcal{K}$ defines
our projective variety   $\mathcal{V}_4$ scheme-theoretically.

By definition, the ideal $\mathcal{K}$ is generated by its degree-$12$ component $\mathcal{K}_{12}$.
To prove Theorem \ref{thm:projective} we need to show that
$\mathcal{K}_{12}$ is a $G$-module of $\C$-dimension $718$,
and that it decomposes into $14$ irreducible $G$-modules. 
As a $G$-module, the graded component $\mathcal{K}_{12}$ is generated by 
the homogenizations of the $65$ polynomials
 in Table \ref{summary}.  Representation theory as  in \cite{LM, O}
 tells us that the unique highest weight vectors of the irreducible $G$-modules contained
 in $\mathcal{K}_{12}$ can be found by applying raising operators $h \in \mathfrak{g}$ to these $65$
 generators.
 
    Indeed, consider the 1st, 5th and 26th polynomials in Table \ref{summary}. When written in terms of cycle-sums, these polynomials have $32$, $42$ and $91$ terms respectively. Let $D$, $E$ and $F$ be their homogenizations with respect to $A_\emptyset$. By applying
     the raising and lowering operators in the Lie algebra $\mathfrak{g}$,
     one  checks that all $65$ generators lie in the $G {\rtimes} \mathfrak{S}_4$-orbit of $D$, $E$ and $F$.
     Thus,     
$$
\mathcal{K}_{12} \,\,\, = \,\,\, M_D \oplus M_E \oplus M_F
$$
where $M_D$, $M_E$ and $M_F$ are the
$G$-modules spanned by the $G {\rtimes} \mathfrak{S}_4$-orbits of the polynomials
$D$, $E$ and $F$ respectively. Furthermore, as in \cite{LM}, we write
$$ S_{ijkl} \, = \,
S_{(12-i,i)}(\C^2) \otimes S_{(12-j,j)}(\C^2) \otimes S_{(12-k,k)}(\C^2) \otimes S_{(12-l,l)}(\C^2)
$$
for the tensor product of Schur powers of $\C^2$.
The dimension of  $S_{ijkl}$ equals $  (13-2 i) (13-2 j) (13-2 k) (13-2 l)$.
Our three $G$-modules decompose as
\begin{eqnarray*}
&&M_D \,\, \simeq \,\, S_{4555} \oplus S_{5455} \oplus S_{5545} \oplus S_{5554}, \\
&&M_E \,\, \simeq \,\, S_{4466} \oplus S_{4646} \oplus S_{4664} \oplus S_{6446} \oplus S_{6464} \oplus S_{6644}, \\
&& M_F \,\, \simeq \,\, S_{3666} \oplus S_{6366} \oplus S_{6636} \oplus S_{6663}.
\end{eqnarray*}
The above three vector spaces have dimensions 540, 150 and 28 respectively.
This shows that $\,\dim(\mathcal{K}_{12})=718$, and the proof
of Theorem \ref{thm:projective} is complete.
\end{proof}

Many questions about relations among
principal minors remain open at this point, even for $n=4$.
The most basic question is whether the ideal $\mathcal{K}$
is prime, that is, whether $\mathcal{K} = \mathcal{J}$ holds (cf. Remark \ref{rem:BR}).
Next, it would be desirable to find a nice determinantal representation 
for the polynomials $E$ and $F$, in analogy to 
$D$ being the homogenization of the determinant of the last four rows in (\ref{5x4matrix}).
We know little about the prime ideal $\mathcal{I}_n$ for $n \geq 5$.
It contains various natural images of the ideal $\mathcal{I}_4$, 
but we do not know whether these generate. The most optimistic
conjecture would state that $\mathcal{I}_n$
is generated by the ${\rm GL}_2(\C)^n$-orbit of the polynomials $D$, $E$ and $F$.
The work of Oeding \cite{O} gives hope that at least the
set-theoretic version might be within reach:

\begin{conj}
The variety  $\mathcal{V}_n \subset \PP^{2^n-1}$ is cut out by equations
 of degree~$12$.
\end{conj}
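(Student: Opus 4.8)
Since the homogeneous prime ideal $\mathcal{J}_n$ is $\mathrm{GL}_2(\C)^n$-invariant and $\mathcal{V}_n$ maps into $\mathcal{V}_4$ under each principal-submatrix projection $u_* \mapsto (u_{S'})_{S' \subseteq S}$, $S$ a $4$-subset of $[n]$, the pullbacks of the $n=4$ relations all vanish on $\mathcal{V}_n$; in particular so does the $\mathrm{GL}_2(\C)^n {\rtimes} \mathfrak{S}_n$-orbit of $D$, $E$, $F$, regarded as polynomials in the variables $\{A_I : I \subseteq [4]\}$ via the inclusion $[4] \hookrightarrow [n]$. Write $\mathcal{L}_n \subseteq \C[A_*]_{12}$ for the span of this orbit. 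Then $\mathcal{V}_n \subseteq V(\mathcal{L}_n)$ for free, and the content of the conjecture is the reverse inclusion $V(\mathcal{L}_n) \subseteq \mathcal{V}_n$. The plan is to prove this by induction on $n$, following the ``inheritance'' pattern of Oeding's solution of the symmetric case \cite{O}, but carried out on the side of cycle-sums rather than hyperdeterminants.

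The first reduction is to a single affine chart, exactly as in the corollary following Theorem \ref{thm:projective}: the group $G = \mathrm{GL}_2(\C)^n$ acts transitively on the charts $U_I = \{u_* : u_I \neq 0\}$ of $\PP^{2^n-1}$ and stabilizes $\mathcal{L}_n$, so it suffices to show that every $u_* \in \C^{2^n}$ with $u_\emptyset = 1$ that is annihilated by $\mathcal{L}_n$ is realizable as the principal minors of some $n {\times} n$ matrix. By Corollary \ref{co-realizable} this is equivalent to realizing the vector $\psi(u_*)$ as cycle-sums, so I would work on the cycle-sum side throughout, where by Lemma \ref{cyclemapclosed} the ultimate target is the toric image of the cycle monomial map $\gamma$.

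The core is then an induction on $n$, with base cases $n \leq 4$ supplied by Theorems \ref{thm:affine} and \ref{thm:projective} (and triviality for $n \leq 3$). For $n \geq 5$, the choice of $\mathcal{L}_n$ forces every $4$-subset restriction $(u_{S'})_{S' \subseteq S}$ to satisfy the $n=4$ relations, hence — by the $n=4$ case, which is Theorem \ref{thm:projective} read set-theoretically — to be the principal-minor vector of a $4 {\times} 4$ matrix; consequently every $(n-1)$-subset restriction lies in $\mathcal{V}_{n-1}$ and is realizable by the inductive hypothesis. Passing through $\psi$, every putative cycle value $c_\pi$ with $|\pi| \leq n-1$ is then determined up to the finite branch ambiguities governed by Lemma \ref{cycle-decomp} and Proposition \ref{integral} (two $3$-cycles on a common support are the roots of a determined quadratic, six $4$-cycles on a common support the roots of a determined sextic, and so on), compatibly across overlapping supports. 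The crux, and the step I expect to be the real obstacle, is the \emph{gluing}: producing a single $n {\times} n$ matrix compatible with all of this local data, equivalently showing that $\gamma$ surjects onto the set of globally consistent tuples $(c_\pi)$ with no obstruction surviving past the $4$-subsets. Concretely this means choosing the signs and roots for the $k$-cycles, $k \geq 3$, coherently over the complete graph on $[n]$, choosing the $n$-cycle values coherently with them, and ruling out a monodromy obstruction --- the non-symmetric counterpart of the prolongation step in Oeding's argument. As noted in the introduction, even the closure theorem is ``quite subtle'' in the non-symmetric case, so I expect this will require a strengthening of Lemma \ref{cycle-decomp} that pins down \emph{all} relations among the monomials $c_\pi$, not just those that rewrite a product as smaller cycles, together with an inheritance statement effecting the reduction of general $n$ to $n=4$.

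As a first test, and a likely source of the right general mechanism, I would settle $n = 5$ by hand: the chart reduction and the $n = 4$ theorem reduce it to a finite analysis of cycle-value branches on a $5$-vertex graph, which should be within reach of computer algebra even though a direct Gr\"obner basis in the $32$ unknowns $A_I$ is not. A complementary avenue is the geometry of Section~5: should $\mathcal{V}_n$ turn out to be a distinguished component of the singular locus of the $2 {\times} \cdots {\times} 2$-hyperdeterminant for every $n$, equations for that locus in the spirit of Weyman--Zelevinsky \cite{WZ} might produce the degree-$12$ generators more directly, bypassing the gluing altogether.
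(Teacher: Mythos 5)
This statement is labeled a \emph{conjecture} in the paper, and the paper supplies no proof of it; the authors explicitly state only that Oeding's work on the symmetric case ``gives hope that at least the set-theoretic version might be within reach.'' So there is no paper argument to compare against, and what you have written is, by your own account, a research program rather than a proof.

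The program you lay out is sensible and matches the heuristic the authors gesture at: homogenize and take the $G \rtimes \mathfrak{S}_n$-orbit of $D$, $E$, $F$ so that $\mathcal{V}_n \subseteq V(\mathcal{L}_n)$ is automatic, reduce to the chart $U_\emptyset$ by transitivity of the $G$-action, translate to cycle-sums via Corollary~\ref{co-realizable}, invoke Theorem~\ref{thm:projective} as the $n=4$ base case, and attempt an inheritance induction through $4$-subset restrictions. But the step you flag as ``the crux'' — coherently choosing the branches for $c_\pi$, $3 \leq |\pi| \leq n$, and producing one $n\times n$ matrix realizing all of them simultaneously — is precisely what the conjecture asks for and is left completely open. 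Proposition~\ref{integral} and Lemma~\ref{cycle-decomp} only give that each $c_\pi$ lies in a finite fiber over the subalgebra $\C[P_*]$; they say nothing about which branch choices are globally consistent, and the paper contains no monodromy or prolongation argument that would close that gap. There is also a secondary gap you pass over quickly: the $k$-cycle values for $k \geq 5$ do not live on any $4$-subset, so the reduction to $n=4$ data alone does not even constrain them directly — the induction through $(n-1)$-subsets is needed, but whether $\mathcal{L}_n$ (pullbacks of degree-$12$ relations only) already forces the $(n-1)$-subset restrictions into $V(\mathcal{L}_{n-1})$ requires an argument that the $G_{n}$-orbit of the pulled-back $4$-subset relations contains the $G_{n-1}$-orbit needed for the inductive hypothesis. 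In short: the setup is aligned with what the authors likely have in mind, but no proof is present on either side, and the genuine missing idea is the gluing/monodromy step that you correctly identify but do not resolve.
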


\section{Singularities of the Hyperdeterminant}

Our object of study is the projective variety $\mathcal{V}_4$
which is parametrized by the principal minors of a generic $4 {\times} 4$-matrix. We have seen that
$\mathcal{V}_4$ is a variety of codimension two in the projective space
$\PP^{15}$. That ambient space is the projectivization of the
vector space $\C^2 \otimes \C^2 \otimes \C^2 \otimes \C^2$
of $2 {\times} 2 {\times} 2 {\times} 2 $-tables $ a = (a_{ijkl})$.
This section offers a geometric characterization of
the variety $\mathcal{V}_4$.

The articles \cite{HS, O} show that the variety parametrized
by the principal minors of a  symmetric matrix is closely 
related to the $2 {\times} 2 {\times} 2$-hyperdeterminant. It is 
thus quite natural for us to ask whether such a relationship
also exists in the non--symmetric case. We shall argue that this
is indeed the case.

The {\em hyperdeterminant} of format  $2 {\times} 2 {\times} 2 {\times} 2 $
is a homogeneous polynomial of degree $24$ in $16$ unknowns 
having $2,894,276 $ terms  \cite{HSYY}.  Its expansion into cycle-sums
was found to have $13,819$ terms. The hypersurface $\nabla$ 
of this hyperdeterminant
 consists
of all tables $\,a \in \PP^{15} \,$ such that the hypersurface 
$$\,\mathcal{H}_a \,\,\,= \,\,\, \bigl\{ (x,y,z,w) \in
\PP^1 {\times} \PP^1 {\times} \PP^1 {\times} \PP^1\,:
\,\sum_{i=0}^1\sum_{j=0}^1\sum_{k=0}^1 \sum_{l=0}^1
a_{ijkl} x_i y_j z_k w_l\, = 0 \bigr\}$$
 has a singular point.
Weyman and Zelevinsky \cite{WZ} showed that
the hyperdeterminantal hypersurface  $\nabla \subset \PP^{15}$ is singular
in codimension one. More precisely, by
\cite[Thm. 0.5 (5)]{WZ}, the singular locus $\nabla_{\rm sing}$
of $\nabla$ is the union in $\PP^{15}$ of eight
irreducible projective varieties, each having dimension $13$:
\begin{equation}
\label{nablasing}
 \nabla_{\rm sing} \,\,\,\, = \,\,\,\,
\nabla_{\rm node}(\emptyset) \,\,\,\cup \,
\bigcup_{1 \leq i < j \leq 4} \! \nabla_{\rm node}(\{i,j\}) 
 \,\,\,\cup  \,\,\,
\nabla_{\rm cusp} .
\end{equation}
Here, the {\em node component} $\nabla_{\rm node}(\emptyset)$ 
is the closure of the set of tables $a$ such that
$\mathcal{H}_a$ has two singular points
$(x,y,z,w)$ and $(x',y',z',w')$ with
$x \not= x'$, $y \not= y'$, $z \not= z'$ and $w \not= w'$.
The {\em extraneous component} $\nabla_{\rm node}(\{1,2\})$
is the closure of the set of tables $a$ such that
$\mathcal{H}_a$ has two singular points $(x,y,z,w)$ and $(x,y,z',w')$,
and similarly for the other five extraneous components.
Finally, the cusp component  $\nabla_{\rm cusp} $ parametrizes
all tables $a$ for which $\mathcal{H}_a$ has a  triple point.
The connection to our study is given by the following result:

\begin{thm} \label{thm:hyperdet}
The node component in the singular locus of the
$ \, 2 {\times} 2 {\times} 2 {\times} 2 \,$-
hyperdeterminant coincides with the variety  parametrized by the
principal minors of a generic $4 {\times} 4$-matrix.
In symbols, we have $\, \mathcal{V}_4 \, = \,\nabla_{\rm node}(\emptyset)$.
\end{thm}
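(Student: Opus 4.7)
The plan is to combine the irreducibility of both sides with a single containment. Both $\mathcal{V}_4$ and $\nabla_{\rm node}(\emptyset)$ are known to be irreducible $13$-dimensional subvarieties of $\PP^{15}$: the first by the discussion following Theorem \ref{thm:affine} together with the $G$-invariance in Section \ref{sec:PRT}, and the second by Weyman--Zelevinsky \cite[Thm.~0.5]{WZ}. It therefore suffices to prove $\mathcal{V}_4 \subseteq \nabla_{\rm node}(\emptyset)$, since an irreducible projective variety cannot strictly contain a closed irreducible subvariety of its own dimension.

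To establish this inclusion, I would take a generic point $a = \phi(A,B) \in \mathcal{V}_4$ and exhibit, in its hypersurface $\mathcal{H}_a \subset (\PP^1)^4$, exactly two singular points whose $\PP^1$-coordinates differ in every one of the four factors. The starting observation is that multilinearity of the determinant yields the identity
\begin{equation*}
\sum_{I \subseteq [4]} \det(A_I B_{[4] \setminus I}) \prod_{i \in I} x_1^{(i)} \prod_{i \notin I} x_0^{(i)} \,\,=\,\, \det\bigl(A X_1 + B X_0\bigr),
\end{equation*}
where $X_k = {\rm diag}(x_k^{(1)}, x_k^{(2)}, x_k^{(3)}, x_k^{(4)})$. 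Thus $\mathcal{H}_a$ is a determinantal hypersurface, and its singular locus is governed by the rank stratification of the matrix $M(x) = A X_1 + B X_0$.

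Two computations then pin down the singular points. First, a rank-$\leq 2$ point of $M$ is automatically a singular point of $\mathcal{H}_a$, and such points are located via a Pl\"ucker calculation: a $2$-plane $U \in {\rm Gr}(2,4)$ left-annihilates all four columns of $M$ precisely when $\det(C_j U) = 0$ for $j = 1,2,3,4$, where $C_j$ is the $2 {\times} 4$ matrix whose rows are the $j$-th columns of $A$ and of $B$. By Cauchy--Binet each of these four equations is a linear form in the Pl\"ucker coordinates of $U$, cutting out a hyperplane in $\PP^5$. For generic $(A,B)$ these four hyperplanes meet in a line, and that line meets the Klein quadric ${\rm Gr}(2,4) \subset \PP^5$ in exactly two points. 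Second, I would rule out singular points of $\mathcal{H}_a$ on the rank-$3$ stratum of $M$ by writing ${\rm adj}(M) = v u^T$: the singularity equations $({\rm adj}(M) A)_{jj} = ({\rm adj}(M) B)_{jj} = 0$ become $v_j(A^T u)_j = v_j(B^T u)_j = 0$ for all $j$, and a short support argument shows that this forces $v = 0$ or $u = 0$ when $(A,B)$ is in general position.

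The main technical obstacle is verifying the two discriminating properties of the resulting pair of singular points: that each is a genuine ordinary double point of $\mathcal{H}_a$ (otherwise we would land in $\nabla_{\rm cusp}$), and that their $\PP^1$-coordinates differ in every factor (otherwise we would land in some extraneous $\nabla_{\rm node}(\{i,j\})$). Both are Zariski-open conditions on $(A,B)$, so the cleanest route is to exhibit one explicit numerical example where both properties hold, and then invoke the irreducibility of $\mathcal{V}_4$ to conclude that the generic point belongs to $\nabla_{\rm node}(\emptyset)$, completing the proof.
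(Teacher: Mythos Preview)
Your overall strategy---reduce to one containment via irreducibility and equal dimension---matches the paper's. Where you diverge is in the mechanism for $\mathcal{V}_4 \subseteq \nabla_{\rm node}(\emptyset)$. The paper dehomogenizes to the chart $A_\emptyset = 1$, writes the defining form of $\mathcal{H}_a$ as $\det\bigl((c_{ij}) + {\rm diag}(x,y,z,w)\bigr)$ for a generic matrix $(c_{ij})$, and computes a lexicographic Gr\"obner basis over $\Q(c_{ij})$ for the ideal of $3 \times 3$ minors; the resulting initial ideal $\langle x,y,z,w^2 \rangle$ certifies that the singular locus consists of two reduced points. Your route is more geometric: you keep the projective parametrization $\det(AX_1 + BX_0)$, locate the rank-$\leq 2$ locus by intersecting four Pl\"ucker hyperplanes with the Klein quadric in $\PP^5$, and separately exclude rank-$3$ singularities via the adjugate factorization. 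The paper's argument is essentially a one-line computer verification; yours explains \emph{why} there are exactly two singular points through an intersection-theoretic count on ${\rm Gr}(2,4)$. Two remarks on your outline: the rank-$3$ analysis and the ordinary-double-point check are both unnecessary for the containment, since $\nabla_{\rm node}(\emptyset)$ is by definition the \emph{closure} of the locus where $\mathcal{H}_a$ has two singular points differing in all four $\PP^1$-factors---further or non-nodal singularities do not obstruct membership. The only check you actually owe is that your two rank-$\leq 2$ points differ in every factor, and a single numerical example suffices for that, exactly as you propose.
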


\begin{proof}
We now dehomogenize by setting 
$x_0=y_0=z_0=w_0=1$,
$x_1=x, \,  y_1=y,\,
 z_1=z\,$ and $\, w_1=w$.
Let $(c_{ij})$ be a generic 
complex $4 {\times} 4$-matrix
and consider the ideal in 
$\C[x,y,z,w]$ generated by
the $3 {\times} 3$-minors~of 
\begin{equation}
\label{fourbyfour}
\begin{pmatrix}
c_{11}+x  &  c_{12}  &  c_{13}  &  c_{14}  \\
c_{21}  &  c_{22}+y  &  c_{23}  &  c_{24}  \\
c_{31}  &  c_{32}  &  c_{33}+z  &  c_{34}  \\
c_{41}  &  c_{42}  &  c_{43}  &  c_{44}+w 
\end{pmatrix}.
\end{equation}
We claim that the variety of this ideal consists of two distinct
points in $\C^4$. We prove this by a computation.
Regarding the
$c_{ij}$ as unknowns, we compute a
Gr\"obner basis for the ideal of $3 {\times} 3$-minors
with respect to the lexicographic term order
$\, x > y > z > w \,$ over the base field
$K = \Q(c_{11}, c_{12}, \cdots, c_{44})$.
The output shows that the ideal is radical and 
the initial ideal equals
$\langle x,y,z,w^2 \rangle $.

The determinant of (\ref{fourbyfour})
is the affine multilinear form
$$ F (x,y,z,w) \,\, = \,\, \sum_{i=0}^1   \sum_{j=0}^1   \sum_{k=0}^1 
 \sum_{l=0}^1  a_{ijkl} \cdot x^i y^j z^k w^l $$
whose coefficients are the principal minors of the
$4 {\times} 4$-matrix $(c_{ij})$. The claim 
established in the previous paragraph
implies that the system of equations
$$ F \,=\,
\frac{\partial F}{\partial x} \,=\,
\frac{\partial F}{\partial y} \,=\,
\frac{\partial F}{\partial z} \,=\,
\frac{\partial F}{\partial w} \,=\, 0  $$
has two distinct solutions over the algebraic closure of
$K = \Q(c_{11},  \cdots, c_{44})$.
These two solutions correspond to two distinct
singular points of the hypersurface  $\,\mathcal{H}_a \,$ in $\,
\PP^1 {\times} \PP^1 {\times} \PP^1 {\times} \PP^1$.
From this we conclude that the table $\,a = (a_{ijkl})\,$
of principal minors of $(c_{ij})$ lies in the
node component
$\,\nabla_{\rm node}(\emptyset)$.

Our argument establishes the inclusion 
$ \mathcal{V}_4  \subseteq \nabla_{\rm node}(\emptyset)$.
Both sides are irreducible subvarieties of $\PP^{15}$,
and in fact, they share the same dimension, namely $13$.
This means they must be equal.
\end{proof}

Our results in Section \ref{sec:PRT} give
an explicit description of the equations that  define the first 
component in the decomposition 
(\ref{nablasing}).
This raises the problem of identifying the equations of
the other seven components.

\smallskip

\begin{remark} \label{rem:BR}
After posting the first version of this paper on the {\tt arXiv}, we 
learned that some of the results in this paper have already been addressed in \cite[\S 4]{BR}.
Specifically, Landsberg's observation that $\mathcal{V}_n$ is $G$-invariant
coincides with \cite[Theorem 4.2]{BR}, and our Theorem \ref{thm:hyperdet}
coincides with \cite[Theorem 4.6]{BR}. Coincidentally, without reference to dimensions, we proved $\mathcal{V}_4  \subseteq \nabla_{\rm node}(\emptyset)$ directly while \cite[Theorem 4.6]{BR} gives the other inclusion. Moreover, at the very end of the paper \cite{BR}, it is stated that
{\em ``... the variety has degree $28$, with ideal generated by
a whopping $718$ degree $12$ polynomials''}.
This appears to prove our conjecture (at the end of Section 4)
that the ideal $\mathcal{K} $ is actually prime. We verified that
the ideal $\mathcal{K}$ has degree $28$, but we did not
yet succeed in verifying that $\mathcal{K}$ is saturated
with respect to the irrelevant maximal ideal.
We tried to do this computation by specializing the $16$ unknowns to linear 
in fewer unknowns but this leads to an ideal which is not prime.
It thus appears that the ideal $\mathcal{K}$ 
is not Cohen-Macaulay. 
\end{remark}

\bigskip \smallskip

\noindent {\bf Acknowledgements.} Shaowei Lin was supported by
graduate fellowship from A*STAR (Agency for Science, Technology and Research, Singapore).
Bernd Sturmfels was supported in part by the U.S.~National Science Foundation 
(DMS-0456960 and DMS-0757236). We thank Luke Oeding for conversations and software
that helped us in Section \ref{sec:PRT}, we thank Harald Helfgott for bringing the classical papers \cite{M,N,St} to our attention, and we thank Eric Rains for pointing us to
his earlier results in \cite[\S 4]{BR}, discussed above.

\bigskip 
\bigskip
\bigskip

\noindent {\bf Authors' addresses:}

\bigskip

\noindent Shaowei Lin and Bernd Sturmfels, Dept.~of Mathematics,
University of California, Berkeley, CA 94720, USA,
\url{{shaowei,bernd}@math.berkeley.edu}

\end{document}